\font\cyr=wncyr10
\newcommand{\A}{{\mathbb A}}
\newcommand{\C}{{\mathbb C}}
\newcommand{\F}{{\mathbb F}}
\newcommand{\ff}{{\mathbbm f}}
\newcommand{\Q}{{\mathbb Q}}
\newcommand{\Ql}{{\Q_\ell}}
\newcommand{\Qp}{{\Q_p}}
\newcommand{\bQ}{{\overline\Q}}
\newcommand{\bQl}{\bQ_\ell}
\newcommand{\bQp}{\bQ_p}
\newcommand{\R}{{\mathbb R}}
\newcommand{\T}{{\mathbb T}}
\newcommand{\Z}{{\mathbb Z}}
\newcommand{\Zp}{{\Z_p}}
\newcommand{\gra}{{\mathfrak a}}
\newcommand{\grb}{{\mathfrak b}}
\newcommand{\grL}{{\mathfrak L}}
\newcommand{\grm}{{\mathfrak m}}
\newcommand{\grp}{{\mathfrak p}}
\newcommand{\eps}{{\epsilon}}
\newcommand{\kap}{{\kappa}}
\newcommand{\vphi}{{\varphi}}
\newcommand{\CC}{{\mathcal C}}
\newcommand{\CL}{{\mathcal L}}
\newcommand{\CM}{{\mathcal M}}
\newcommand{\CN}{{\mathcal N}}
\renewcommand{\O}{{\mathcal O}}
\newcommand{\CP}{{\mathcal P}}
\newcommand{\CS}{{\mathcal S}}
\newcommand{\CX}{{\mathcal X}}
\newcommand{\alg}{\mathrm{alg}}
\newcommand{\Aut}{\mathrm{Aut}}
\newcommand{\cyc}{\mathrm{cyc}}
\renewcommand{\div}{\mathrm{div}}
\newcommand{\frob}{\mathrm{frob}}
\newcommand{\Gal}{\mathrm{Gal}}
\newcommand{\GL}{\mathrm{GL}}
\newcommand{\Hom}{\mathrm{Hom}}
\newcommand{\im}{\mathrm{im}}
\newcommand{\isoarrow}{\stackrel{\sim}{\rightarrow}}
\newcommand{\isom}{\cong}
\newcommand{\opt}{\mathrm{opt}}
\newcommand{\ord}{\mathrm{ord}}
\newcommand{\sat}{\mathrm{sat}}
\newcommand{\Sel}{\mathrm{Sel}}
\newcommand{\Sha}{\hbox{\cyr X}}
\newcommand{\SL}{\mathrm{SL}}
\newcommand{\trace}{\mathrm{trace}\,}
\newcommand{\ur}{\mathrm{ur}}
\newtheorem{lem}[subsubsection]{\bf Lemma}
\newtheorem{thm}[subsubsection]{\bf Theorem}
\newtheorem{prop}[subsubsection]{\bf Proposition}
\newtheorem{conj}[subsubsection]{\bf Conjecture}
\newenvironment{customthm}[1]
  {\innercustomthm}
  {\endinnercustomthm}
\theoremstyle{definition}
\newtheorem{rmk}[subsubsection]{\it Remark}
\author{Christopher Skinner}
\address{
 Department of Mathematics\\
 Princeton University\\
  Fine Hall, Washington Road \\
 Princeton, NJ 08544-1000\\
 USA}
\title[Multiplicative reduction and the cyclotomic main conjecture 
for $\GL_2$]
{Multiplicative reduction and the cyclotomic \\ main conjecture 
for $\GL_2$
}
\begin{document}
\setcounter{section}{0}
\begin{abstract}
We show that the cyclotomic Iwasawa--Greenberg Main Conjecture holds for 
a large class of modular forms with multiplicative reduction at $p$, extending
previous results for the good ordinary case. In fact, the multiplicative case
is deduced from the good case through the use of Hida families and a simple Fitting ideal argument.
\end{abstract}
\maketitle

\section{Introduction}

The cyclotomic Iwasawa--Greenberg Main Conjecture was established in \cite{SU-MCGL}, in 
combination with work of Kato \cite{Kato}, for a large
class of newforms $f\in S_k(\Gamma_0(N))$ that are ordinary at an odd prime $p\nmid N$, subject to 
$k\equiv 2 \pmod {p-1}$ and certain conditions on the mod~$p$ Galois representation associated
with $f$. The purpose of this note is to extend this result to the case where
$p\mid N$ (in which case $k$ is necessarily equal to $2$). 

Recall that the coefficients $a_n$ of the $q$-expansion $f=\sum_{n=1}^\infty a_nq^n$ of $f$ at the cusp at 
infinity (equivalently, the Hecke eigenvalues of $f$) are algebraic integers that generate a finite extension 
$\Q(f)\subset\C$ of $\Q$. Let $p$ be an odd prime and let $L$ be a finite extension of
the completion of $\Q(f)$ at a chosen prime above $p$ (equivalently, let $L$ be a finite
extension of $\Qp$ in a fixed algebraic closure $\bQp$ of $\Qp$ that contains the image of a chosen 
embedding $\Q(f)\hookrightarrow\bQp$). Suppose that $f$ is ordinary at $p$ with respect to $L$ 
in the sense that $a_p$ is a unit in the ring of integers $\O$ of $L$. Then the $p$-adic $L$-function 
$\CL_f$ of $f$ is an element of the Iwasawa algebra $\Lambda_\O=\O[\![\Gamma]\!]$,
where $\Gamma=\Gal(\Q_\infty/\Q)$ is the Galois group of the cyclotomic $\Zp$-extension
$\Q_\infty$ of $\Q$.  A defining property of $\CL_f$ is that it interpolates normalized special values of the 
$L$-function of $f$ twisted by Dirichlet characters associated with finite-order characters of 
$\Gamma$. The Iwasawa--Greenberg Selmer group $\Sel_{\Q_\infty,L}(f)$, defined
with respect to the $p$-adic Galois representation $V_f$ of $f$ over $L$ - a two-dimensional $L$-vector space
- and a Galois-stable $\O$-lattice $T_f\subset V_f$, is a discrete, cofinite $\Lambda_\O$-module, and
the Iwasawa--Greenberg characteristic ideal $Ch_L(f)\subset\Lambda_\O$ is the characteristic $\Lambda_\O$-ideal
of the Pontryagin dual $X_{\Q_\infty,L}(f)$ of $\Sel_{\Q_\infty,L}(f)$. 
The Iwasawa--Greenberg Main Conjecture for $f$ then asserts that there is an equality of ideals 
$Ch_L(f) = (\CL_f)$ in $\Lambda_\O\otimes_\Zp\Qp$ and even in $\Lambda_\O$ if $T_f$ is residually irreducible.

\begin{customthm}{A}\label{thmA} Let $p\geq 3$ be a prime.
Let $f\in S_k(\Gamma_0(N))$ be a newform and let $L$ and $\O$ be as above
and suppose $f$ is ordinary at $p$ with respect to $L$.
If
\begin{itemize}
\item[\rm (i)] $k\equiv 2\pmod{p-1}$;
\item[\rm (ii)] the reduction $\bar\rho_f$ of 
the representation $\rho_f:\Gal(\bQ/\Q)\rightarrow \Aut_\O(T_f)$ modulo the maximal ideal of $\O$ is irreducible;
\item[\rm (iii)] there exists a prime $q\neq p$ such that $q\mid \mid N$ and
$\bar\rho_f$ is ramified at $q$,
\end{itemize}
then $Ch_{L}(f)=(\CL_f)$ in $\Lambda_{\O}$. That is,
the Iwasawa--Greenberg Main Conjecture is true.
\end{customthm}

When $p\nmid N$ this is just Theorem 1 of \cite{SU-MCGL}\footnote{In order to conclude that the equality
holds in $\Lambda_\O$ and not just $\Lambda_\O\otimes_\Zp\Qp$, Theorem 1 in \cite{SU-MCGL} requires 
that $\rho_f$ have an $\O$-basis with respect to which the image contains $\SL_2(\Zp)$. 
But as we explain in Section~\ref{MCsection}, hypotheses (ii) and (iii) of Theorem \ref{thmA} are enough for the arguments.
We also explain that the reference to \cite{Vatsal} in \cite{SU-MCGL} should have been augmented with a reference to 
\cite{Chida-Hsieh}.}. 
When $p\mid N$, in which case the ordinary
hypothesis forces $p\mid \mid N$ and $k=2$, this is not an immediate consequence of the results in \cite{SU-MCGL}, 
as this case is excluded from Kato's divisibility theorem \cite[Thm.~17.4]{Kato}, which is a crucial 
ingredient in the deduction of the main conjecture from the main results in \cite{SU-MCGL}.  However,
as we explain in this note, the main conjecture in the case $p\mid N$ can be deduced from knowing it
when $p\nmid N$.

Having the cyclotomic main conjecture in hand, one obtains results toward special value formulas.
For example:

\begin{customthm}{B}\label{thmB}  Let $p\geq 3$ be a prime.
Let $f\in S_2(\Gamma_0(N))$ be a newform and let $L$ and $\O$ be as above
and suppose $f$ is ordinary.
Suppose also that 
\begin{itemize}
\item[(i)] the reduction $\bar\rho_f$ of 
the representation $\rho_f:\Gal(\bQ/\Q)\rightarrow \Aut_\O(T_f)$ modulo the maximal ideal of $\O$ is irreducible;
\item[(ii)] there exists a prime $q\neq p$ such that $q\mid \mid N$ and
$\bar\rho_f$ is ramified at $q$;
\item[(iii)] if $p\mid N$ and $a_p = 1$, then the $\grL$-invariant $\grL(V_f)\in L$ is nonzero.
\end{itemize}
Let 
$$
L^\alg(f,1) = \frac{L(f,1)}{-2\pi i\Omega_f^+}.
$$
Then 
$$
\#\O/(L^\alg(f,1)) = \#\Sel_{L}(f) \cdot \prod_{\ell} c_\ell(T_f).
$$
In particular, if $L(f,1)=0$, then $\Sel_L(f)$ has $\O$-corank at least one.
\end{customthm}

\noindent Here $\Omega_f^+$ is one of two canonical periods associated with $f$
as in \cite[\S3.3.3]{SU-MCGL} (and well-defined up to an element of
$\O^\times\cap \Q(f)$), 
$\Sel_L(f)$ is the Selmer group associated by Bloch-Kato to the 
Galois lattice $T_f$, $c_\ell(T_f)$ 
is the Tamagawa factor at $\ell$ of $T_f$ (and equals $1$ unless $\ell\mid N$), and $\grL(V_f)$ is the $\CL$-invariant
of a modular form $f$ (or of $V_f$) with split multiplicative reduction at $p$ introduced by
Mazur, Tate, and Teitelbaum in \cite{MTT} (see also \cite[\S3]{GrSt}). It is conjectured that  
$\grL(V_f)$ is always nonzero; this is known if $f$ is the modular form associated to an elliptic curve, but in general
it is an open question.

As a special case of Theorem \ref{thmB}, obtained by taking $f$ to be the newform associated with an elliptic curve 
$E$ over $\Q$, we have:

\begin{customthm}{C}\label{thmC}
Let $E$ be an elliptic curve over $\Q$ with good ordinary or multiplicative reduction at a prime $p\geq 3$. Suppose that
\begin{itemize}
\item[(i)] $E[p]$ is an irreducible $\Gal(\bQ/\Q)$-representation;
\item[(ii)] there exists a prime $q\neq p$ at which $E$ has multiplicative reduction and $E[p]$ is ramified.
\end{itemize}
If $L(E,1)\neq 0$ then
$$
\left|\frac{L(E,1)}{\Omega_E}\right|_p^{-1} = \left| \#\Sha(E)\prod_{\ell} c_\ell(E)\right|_p^{-1},
$$
and if $L(E,1)=0$ then $\Sel_{p^\infty}(E)$ has $\Zp$-corank at least one.
\end{customthm}
\noindent Here, $\Omega_E$ is the N\'eron period of $E$, 
$\Sha(E)$ is the Tate-Shafarevich group of $E/\Q$, and the $c_\ell(E)$ are the Tamagawa numbers of $E$. 
In particular, $c_\ell(E)$ is the order of the group of irreducible components of 
the special fiber of the N\'eron model of $E$ over $\Z_\ell$.

Our proof of Theorem \ref{thmA} is relatively simple. Let $N=pM$. We first make two reductions: (1) it suffices to prove
the theorem with the field $L$ replaced by any finite extension, and (2) it suffices to prove the 
equality $Ch_L^\Sigma(f) = (\CL^\Sigma_f)$, where $\Sigma$ is any finite set of primes 
containing all $\ell\mid N$, $\CL_f^\Sigma$ being the incomplete $p$-adic 
$L$-function with the Euler factors at primes in $\Sigma$ different from $p$ removed and $Ch^\Sigma_L(f)$ being
the characteristic ideal of the Pontryagin dual $X^\Sigma_{\Q_\infty,L}(f)$ of the Iwasawa--Greenberg Selmer group
$\Sel^\Sigma_{\Q_\infty,L}(f)$ with all conditions at primes
in $\Sigma$ different from $p$ relaxed.
Then we exploit Hida theory to deduce that one can choose $L$ so that for each integer $m>0$
there exists a newform $f_m\in S_{k_m}(\Gamma_0(M))$ with $k_m\equiv k \pmod{(p-1)p^m}$,
$\Q(f_m)\subset L$ and $f_m$ ordinary at $p$ with respect to $L$, and the ordinary $p$-stabilization $f^*_m$ of 
$f_m$ satisfies
$f_m^* \equiv f \pmod{p^m}$ in the sense that the $q$-expansions (which have coefficients in $\O$)
are congruent modulo $p^m$.  Furthermore, as a consequence of the existence of the `two-variable' 
$p$-adic $L$-function associated to a Hida family we also have  
$\CL_{f_m}^\Sigma\equiv \CL_f^\Sigma \pmod{p^m\Lambda_\O}$.  An argument of Greenberg shows
that since $X^\Sigma_{\Q_\infty,L}(f_m)$ is a torsion $\Lambda_\O$-module, it has no non-zero 
finite-order $\Lambda_\O$-submodules. From this it follows that $Ch^\Sigma_{L}(f_m)$ equals
the $\Lambda_\O$-Fitting ideal $F_L^\Sigma(f_m)$ of $X^\Sigma_{L}(f_m)$.
The congruence $f_m^*\equiv f \pmod{p^m}$ implies that $\Sel^\Sigma_{\Q_\infty,L}(f)[p^m]
\cong \Sel^\Sigma_{\Q_\infty,L}(f_m)[p^m]$, so comparing Fitting ideals yields 
$$
(F_L^\Sigma(f), p^m) = (F_L^\Sigma(f_m), p^m) = (Ch_L^\Sigma(f_m), p^m)\subset \Lambda_\O.
$$
From the main conjecture for $f_m$ (the congruence $f_m^*\equiv f \pmod p$ ensures that 
the hypotheses of Theorem \ref{thmA} also hold for $f_m$) and the congruence modulo 
$p^m$ of $p$-adic $L$-functions we then have 
$$
(F_L^\Sigma(f), p^m) = (Ch_L^\Sigma(f_m), p^m) = (\CL_{f_m}^\Sigma, p^m) = 
(\CL_{f}^\Sigma, p^m)\subset \Lambda_\O
$$
for all integers $m>0$. This, together with the non-vanishing of the $p$-adic $L$-function $\CL_f^\Sigma$, implies
that $F_L^\Sigma(f)\neq 0$ and hence that $X^\Sigma_{\Q_\infty,L}(f)$ is a torsion $\Lambda_\O$-module.
The earlier argument of Greenberg then gives $Ch^\Sigma_L(f)  = F^\Sigma_L(f)$, and so
$(Ch^\Sigma_L(f),p^m) = (\CL^\Sigma(f),p^m) \subset \Lambda_\O$ for all $m>0$.
As $Ch_L^\Sigma(f)\subset\Lambda_\O$ is a principal ideal, it then easily follows that $Ch^\Sigma_L(f) = (\CL_f^\Sigma)$, 
proving Theorem \ref{thmA}.

The deduction of Theorem \ref{thmB} from Theorem \ref{thmA} follows an argument of 
Greenberg from~\cite{Gr-CIME}.

In addition to extending the main conjecture to the case of multiplicative reduction, our motivation for writing this note
was in part to provide an explicit reference for the expression for the special
value $L^\alg(f,1)$ in terms of the size of Selmer groups that is required for the arguments in \cite{WZ-GZform} 
and, by including the multiplicative reduction case, also provide an important ingredient for the extension of the main results of \cite{WZ-GZform}
to cases of multiplicative reduction. Additional motivation for the latter stems from the author's collaboration
with Manjul Bhargava and Wei Zhang to provide lower bounds on the proportion of elliptic curves that satisfy the rank part of the Birch--Swinnerton-Dyer conjecture.

While preparing this note the author learned of Olivier Fouquet's work \cite{OF-Tam} on the equivariant Tamagawa number conjecture
for motives of modular forms. That work should provide another means for deducing Theorem \ref{thmA} in the case $p\mid N$ from the main results\footnote{But see also footnote 1, especially as the main results in \cite{OF-Tam} rely on Theorem \ref{thmA} as stated, at least for the $p\nmid N$ case.} in \cite{SU-MCGL} as well as some additional weakening of the conditions on primes away from $p$. The deduction of Theorem \ref{thmA} for $p\mid N$ in this paper uses no more machinery than already developed in \cite{SU-MCGL} or than is required for our 
deduction of Theorem \ref{thmB}.

\noindent{\it Acknowledgments.} 
The author thanks Xin Wan for helpful conversations and Manjul Bhargava for helpful comments. 
The author's work was partially supported by the grants  DMS-0758379 and 
DMS-1301842 from the National Science Foundation.

\section{Gathering the pieces}\label{gather}

In this section we recall the various objects that go into the Iwasawa--Greenberg Main Conjecture for modular forms,
some of their properties, and some useful relations. Throughout $p$ is a fixed odd prime.

Let $\bQ\subset\C$ be the algebraic closure of $\Q$ and let $G_\Q=\Gal(\bQ/\Q)$. 
For each prime $\ell$, let $\bQl$ be a fixed algebraic closure of $\bQl$. 
For each $\ell$ we also fix an embedding $\bQ\hookrightarrow\bQl$, which identifies
$G_\Ql=\Gal(\bQl/\bQ)$ with a decomposition subgroup in $G_\Q$; let $I_\ell\subset G_\Ql$
be the inertia subgroup. Let $\frob_\ell \in G_\Ql$ be (a lift of) an arithmetic Frobenius
element.

Let $\eps:G_\Q\rightarrow \Z_p^\times$ be the $p$-adic cyclotomic character. This is
just the projection to $\Gal(\Q[\mu_{p^\infty}]/\Q)$, the latter being canonically isomorphic
to $\Z_p^\times$. Similarly, let $\omega:G_\Q\rightarrow \Z_p^\times$
be the mod $p$ Teichm\"uller character. This is just  the composition of the reduction of
$\eps$ mod $p$ and the multiplicative homomorphism 
$(\Z/p\Z)^\times\hookrightarrow \Z_p^\times$ defined by the Teichm\"uller lifts.

Let $\Q_\infty\subset\Q[\mu^{p\infty}]\subset \bQ$ be the cyclotomic $\Zp$-extension of $\Q$. That is, $\Q_\infty$ is
the unique abelian extension of $\Q$ such that $\Gamma=\Gal(\Q_\infty/\Q)\cong \Zp$. 
Let $\gamma\in \Gamma$ be a fixed topological generator. As 
$\Gal(\Q[\mu_{p^\infty}]/\Q) \isoarrow \Gal(\Q[\mu_p]/\Q)\times\Gamma$, there is a
lift $\tilde\gamma$ of $\gamma$ to $\Gal(\Q[\mu_{p^\infty}]/\Q)$ identified with $(1,\gamma)$,
and we let $u=\eps(\tilde\gamma)\in \Z_p^\times$.

\subsection{Galois representations and (ordinary) newforms}

Let $f\in S_k(\Gamma_0(N))$ be a newform. Let $\Q(f)\subset\C$ be
the finite extension of $\Q$ generated by the Fourier coefficients $a_n(f)$
of the $q$-expansion $f=\sum_{n=0}^\infty a_n(f)q^n$ of $f$ at the cusp at infinity
(equivalently, the field obtained by adjoining the eigenvalues of the action of the
usual Hecke operators on $f$). Fix an embedding $\Q(f)\hookrightarrow \bQp$ and let
$L\subset \bQp$ be a finite extension of $\Qp$ containing the image of $\Q(f)$. 
Let $\O$ be the ring of integers of $L$ (the valuation ring), let $\grm$ be its maximal
ideal, and let $\kap=\O/\grm$ be its residue field.

Associated with $f$ and $L$ (and the embedding $\Q(f)\hookrightarrow L$) is a 
two-dimensional $L$-space $V_f$ and an absolutely irreducible continuous $G_\Q$-representation 
$\rho_f:G_\Q\rightarrow \Aut_L(V_f)$ such that $\rho_f$ is unramified at all 
primes $\ell\nmid Np$ and $\det(1-X\cdot\rho_f(\frob_\ell)) = 1 - a_\ell(f) X + \ell^{k-1}X^2$
for such $\ell$. 
In particular, $\trace\rho_f(\frob_\ell) = a_\ell(f)$ if $\ell\nmid pN$, and $\det\rho_f = \eps^{k-1}$.

Let $T,T'\subset V_f$ be two $G_\Q$-stable $\O$-lattices. Let $\bar\rho$ and $\bar\rho'$
denote, respectively, the two-dimensional $\kap$-representations $T/\grm T$ and $T'/\grm T'$.
The following lemma is well-known, but we include it for later reference.
\begin{lem}\label{lattice-lem}\hfill
\begin{itemize} 
\item[\rm (a)] If $\bar\rho$ or $\bar\rho'$ is irreducible, then $\bar\rho$ and $\bar\rho'$ are equivalent as $\kap$-representations.
In particular, $\bar\rho$ is irreducible if and only if $\bar\rho'$ is irreducible.
\item[\rm (b)] If $\bar\rho$ or $\bar\rho'$ is irreducible, then there exists $a\in L^\times$ such that
$T = aT'$.
\end{itemize}
\end{lem}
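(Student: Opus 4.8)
The plan is to establish (b) first and then read off (a). For (b), note that both the conclusion and the hypothesis are symmetric in $T$ and $T'$: interchanging the two lattices replaces the sought scalar $a$ by $a^{-1}$, and the hypothesis ``$\bar\rho$ or $\bar\rho'$ is irreducible'' is unaffected. So I may assume $\bar\rho$ is irreducible. Since $T$ and $T'$ are $\O$-lattices in the same finite-dimensional $L$-vector space $V_f$, they are commensurable; in particular there is $c\in L^\times$ with $cT'\subseteq T$, and by taking such a $c$ with valuation as small as possible we may further arrange $cT'\not\subseteq\grm T$. Replacing $T'$ by $cT'$ (which alters the scalar we are looking for only by $c$), I reduce to the case $T'\subseteq T$ and $T'\not\subseteq\grm T$.

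Now let $\overline{T'}$ be the image of $T'$ under the reduction map $T\twoheadrightarrow T/\grm T$. It is a $\kap$-subspace of $T/\grm T$; it is stable under $G_\Q$ because $T'$ is; and it is nonzero because $T'\not\subseteq\grm T$. Since $\bar\rho$ is by definition the representation of $G_\Q$ on $T/\grm T$ and is assumed irreducible, $\overline{T'}=T/\grm T$, i.e.\ $T=T'+\grm T$. As $T$ is finitely generated over the local ring $\O$, Nakayama's lemma gives $T=T'$. Undoing the scaling yields $T=aT'$ for some $a\in L^\times$. This proves (b) under the assumption that $\bar\rho$ is irreducible, and by the symmetry noted above it proves (b) in general.

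For (a), the hypothesis lets us invoke (b) to produce $a\in L^\times$ with $T=aT'$. Multiplication by $a$ is then a $G_\Q$-equivariant $\O$-isomorphism $T'\isoarrow T$ carrying $\grm T'$ onto $\grm T$, so it induces an isomorphism of $\kap[G_\Q]$-modules $\bar\rho'\cong\bar\rho$. In particular $\bar\rho$ is irreducible if and only if $\bar\rho'$ is, which is the remaining assertion of (a).

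The argument is entirely routine, and I do not anticipate any genuine obstacle. The only steps that require a moment's care are the normalization that arranges the image of $T'$ in $T/\grm T$ to be a nonzero $G_\Q$-stable subspace (so that irreducibility of $\bar\rho$ can be applied) and the symmetry remark that lets one assume it is $\bar\rho$, rather than $\bar\rho'$, that is irreducible; after those, the proof is just Nakayama's lemma together with the definition of the mod-$\grm$ reductions.
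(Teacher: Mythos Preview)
Your proof is correct and follows essentially the same strategy as the paper's: scale so that $T'\subseteq T$, then use irreducibility of the mod-$\grm$ reduction to force $T'=\varpi^n T$ for some $n$. The only cosmetic difference is that you normalize further to $T'\not\subseteq\grm T$ and finish with Nakayama's lemma, whereas the paper writes $T/T'\cong\O/\grm^n\times\O/\grm^m$ via elementary divisors and exhibits an at-most-one-dimensional $G_\Q$-stable quotient of $T/\grm T$ to conclude $n=m$.
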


\begin{proof} Replacing $T'$ with some $\O$-multiple, we may 
assume that $T'$ is a sublattice of $T$. Then $T/T'\cong \O/\grm^n\times \O/\grm^m$ with $n\leq m$.
Let $\varpi$ be a uniformizer of $\O$ (a generator of $\grm$). Then 
$\varpi^nT/(T'+\varpi^{n+1}T)\cong \O/\grm^{\min(1,m-n)}$
is a $G_\Q$-stable quotient of $T/\grm T \cong \varpi^nT/\varpi^{n+1}T$ of at most one-dimension over $k$.
If $\bar\rho$ is irreducible, then this quotient must be trivial and so $m-n=0$ and $T'= \varpi^n T$, in which
case $T'/\grm T' \cong \varpi^n T/\varpi^{n+1}T \cong T/\grm T$ as $G_\Q$-representations over $\kap$. Reversing 
the roles of $T$ and $T'$ in this argument then yields the lemma.
\end{proof}

We then define $\bar\rho_f$ to be the $\kap$-representation $T/\grm T$ of $G_\Q$ for a 
Galois-stable $\O$-lattice
$T\subset V_f$. By the above lemma, if $\bar\rho_f$ is irreducible for some choice of $T$, then it is irreducible for
any choice of $T$, and the equivalence class of $\bar\rho_f$ is independent of $T$. Of course, it is not difficult
to show that the semisimplification of $\bar\rho_f$ is independent of $T$ even when $\bar\rho_f$ is not irreducible,
but will not need this.

Suppose $k\geq 2$ and $f$ is ordinary with respect to the embedding $\Q(f)\hookrightarrow L$. That is, $a_p(f)\in \O^\times$.
Then $V_f$ has a unique $G_\Qp$-stable $L$-line $V_f^+\subset V_f$ such that $G_\Qp$ acts on $V_f^+$ via the character
$\alpha_f^{-1}\eps^{k-1}$, where $\alpha_f:G_\Qp\rightarrow \O^\times$ is the unique unramified character
such that $\alpha_f(\frob_p)$ equals the (unit) root $\alpha_p$ in $\O^\times$ of the polynomial $x^2-a_p(f)x + p^{k-1}$ if $p\nmid N$ and $\alpha_f(\frob_p) = a_p(f)$ if $p\mid  N$.
(Note that the reduction of the polynomial $x^2-a_p(f)x+p^{k-1}$ modulo $\grm$ is $x(x-\bar a_p(f))$ and so, by Hensel's lemma, 
$\bar a_p(f)$ lifts to a root in $\O^\times$.) The action of $G_\Qp$ on the quotient $V_f^- = V_f/V_f^+$ is via $\alpha_f$.
Given any $G_\Q$-stable $\O$-lattice $T\subset V_f$ we let $T^+ = T\cap V_f^+$ and $T^- = T/T^+$. Then $T^+$ is the unique $G_\Qp$-stable free $\O$-summand of rank one on which
$G_\Qp$ acts via $\alpha_f^{-1}\eps^{k-1}$, and $T^-$ is the unique $G_\Qp$-stable free $\O$-module quotient of rank one on which $G_\Qp$ acts via $\alpha_f$.

The following lemma is also well-known, but we also include it for completeness.

\begin{lem} \label{ordlemma} 
Suppose $a_p(f)\in \O^\times$. If $p\mid N$, then $p\mid \mid N$, $k=2$, and $a_p(f)= \pm 1$. 
\end{lem}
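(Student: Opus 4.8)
The plan is to split into the two cases $p^2\mid N$ and $p\mid\mid N$ and, in each, to pin down $a_p(f)$ from the structure of the local component $\pi_p$ of the automorphic representation attached to the newform $f$ (equivalently, from local--global compatibility for $\rho_f$ at $p$). The case $p^2\mid N$ is disposed of immediately: by the theory of newforms of level $\Gamma_0(N)$ (Atkin--Lehner, Li) one has $a_p(f)=0$ when $p^2\mid N$, contradicting the hypothesis $a_p(f)\in\O^\times$. Hence $p\mid\mid N$, which is the first assertion.

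Now suppose $p\mid\mid N$. The key point is that, since $f$ has trivial nebentypus and conductor exactly divisible by $p$, the local component $\pi_p$ must be an unramified -- necessarily quadratic -- twist $\mathrm{St}\otimes\chi$ of the Steinberg representation: a supercuspidal $\pi_p$ would force $p^2\mid N$, and a ramified (irreducible) principal series of conductor $p$ would have nontrivial central character, contradicting triviality of the nebentypus. Hence $a_p(f)$, the $U_p$-eigenvalue on the new vector, equals $\pm p^{k/2-1}$ (the sign being an Atkin--Lehner eigenvalue), so $a_p(f)^2=p^{k-2}$. On the Galois side this reads: ordinarity forces $\rho_f|_{G_{\Q_p}}$ to be a non-split reducible extension with unramified quotient character $\alpha_f$; the Steinberg/monodromy condition forces its two Jordan--H\"older characters to differ by $\eps$, namely $\alpha_f$ and $\alpha_f\eps$; and $\det\rho_f=\eps^{k-1}$ then gives $\alpha_f\cdot\alpha_f\eps=\eps^{k-1}$, i.e.\ $\alpha_f^2=\eps^{k-2}$. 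In either guise the conclusion follows at once: $a_p(f)$ is a $p$-adic unit, so $p^{k-2}=a_p(f)^2$ is a unit and $k=2$; and then $\alpha_f^2=\eps^{0}$ is trivial, so $a_p(f)=\alpha_f(\frob_p)=\pm1$. This completes the proof.

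The step that requires the most care -- the main obstacle -- is the local identification at $p$ in the case $p\mid\mid N$: excluding the supercuspidal and ramified-principal-series possibilities for $\pi_p$, and, above all, getting the normalization of the Steinberg twist correct so as to extract the exponent $k-2$ (rather than $k$ or $k-1$) in $a_p(f)^2=p^{k-2}$, equivalently the exponent in $\alpha_f^2=\eps^{k-2}$. Once that normalization is fixed, everything else is forced by the elementary fact that $\eps$ is (wildly) ramified at $p$, so no nontrivial power of it can be unramified, while $\alpha_f$ is unramified.
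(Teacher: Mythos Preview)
Your proof is correct and reaches the same pivotal identity $a_p(f)^2=p^{k-2}$ in the case $p\mid\mid N$, from which the conclusions $k=2$ and $a_p(f)=\pm1$ are immediate. The paper's own proof, however, simply quotes this identity (together with $a_p(f)=0$ when $p^2\mid N$) as a classical fact from the Atkin--Lehner--Li theory of newforms with trivial nebentypus, without passing through the local automorphic component $\pi_p$ or local--global compatibility for $\rho_f$. Your route via the Steinberg classification of $\pi_p$ (and the parallel Galois computation $\alpha_f^2=\eps^{k-2}$ from $\det\rho_f=\eps^{k-1}$) is structurally illuminating and connects nicely to the ordinary filtration on $V_f$ used elsewhere in the paper, but it invokes heavier machinery than necessary: the identity $a_p(f)^2=p^{k-2}$ follows directly from the relation $a_p(f)=-\lambda_p\,p^{k/2-1}$ between the $U_p$-eigenvalue and the Atkin--Lehner eigenvalue $\lambda_p=\pm1$ at $p$. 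In short, the two arguments are logically parallel; yours explains \emph{why} the formula holds in representation-theoretic terms, while the paper's is simply a one-line citation of the classical result.
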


\begin{proof} If $f\in S_k(\Gamma_0(N))$ is a newform with trivial Nebentypus such that $p\mid N$,
then $a_p(f) \neq 0$ if and only if $p\mid \mid N$, in which case $a_p(f)^2 = p^{k-2}$. If $a_p(f)\in \O^\times$, then it follows
that $k=2$ and $a_p(f)^2 =1$, so $a_p(f)=\pm 1$. 
\end{proof}

Note that if $f$ is a newform with $p\mid N$ that is ordinary with respect to some embedding $\Q(f)\hookrightarrow \bQp$, then, since $a_p(f)=\pm 1$ by the lemma, it is ordinary with respect to 
all such embeddings. Also, as noted in the proof of the lemma, if $f\in S_2(\Gamma_0(N))$ is a newform with $p\mid \mid N$ then $a_p(f)=\pm 1$ and so $f$ is ordinary with respect to any embedding $\Q(f)\hookrightarrow\bQp$. 

In keeping with the terminology for elliptic curves, we say that a newform $f\in S_2(\Gamma_0(N))$ has 
{\it multiplicative reduction at $p$} if $p\mid \mid N$ and that it has {\it good reduction at $p$} if $p\nmid N$. 
Additionally, we say $f$ has 
{\it split} (resp.~{\it non-split}) multiplicative reduction at $p$ if $p\mid \mid N$ and $a_p(f)=1$ (resp.~$a_p=-1$).
 
\subsection{$\grL$-invariants}\label{Linvariant}
Suppose $f\in S_2(\Gamma_0(N))$ is a newform with split multiplicative reduction at $p$. The Galois representation 
$V_f$ restricted to $G_\Qp$ is an extension
$$
0\rightarrow V_f^+\cong L(1) \rightarrow V_f \rightarrow  V_f^-\cong L \rightarrow 0.
$$
This extension is known to be non-split and semistable but not crystalline\footnote{This is reflected in the compatibility,
proved in \cite{Saito-compat}, of the 
Weil-Deligne representation attached to the dual $V_f^\vee$ by Fontaine with the Weil-Deligne representation
attached by the local Langlands correspondence to the $p$-component $\pi_p$ of the automorphic representation
$\pi=\otimes_{v}\pi_v$ of $\GL_2(\A)$ corresponding to the newform $f$. If $f$ has split (resp.~non-split)
multiplicative reduction at $p$ then $\pi_p$ is the special representation (resp.~the twist of the special representation by 
the unramified quadratic character).}.
Let $\pi_{V_f}:H^1(\Qp,V_f)\rightarrow H^1(\Qp,L)$ be the induced map on cohomology. As the extension is non-split,
the image of $\pi_{V_f}$ is a one-dimensional $L$-space. The $\grL$-invariant $\grL(V_f)$ of $V_f$ is the negative of the `slope' of the 
line $\im(\pi_{V_f})$ with respect to a particular basis of the two-dimensional $L$-space $H^1(\Qp,L)$. 

We have
$$
H^1(\Qp,L) = \Hom_{cts}(G_\Qp,L) = \Hom_{cts}(G_\Qp^{ab,p},L),
$$
where $G_{\Qp}^{ab,p}$ is the maximal abelian pro-$p$ quotient of $G_{\Qp}$. Local class field theory
gives an identification\footnote{To be precise, we normalize the reciprocity law so that uniformizers are taken to arithmetic Frobenius elements.}
$$
\varprojlim_{n} \Q_p^\times/(\Q_p^\times)^{p^n} \isoarrow G^{ab,p}.
$$
From the decomposition $\Q_p^\times = p^\Z \times \Z_p^\times$
we obtain an $L$-basis $\{\psi_\ur$, $\psi_\cyc\}$ of $H^1(\Qp,L)=\Hom_{cts}(G_\Qp^{ab,p},L)$, with 
$$
\psi_\ur(p) = 1 = (\log_p u)^{-1}\cdot \psi_\cyc(u) \ \ \text{and} \ \ \psi_\ur(u)=0 = \psi_\cyc(p).
$$
Recall that $u = \eps(\tilde\gamma)$ is a topological generator of $1+p\Zp$.  The condition that $V_f$ is not crystalline
is equivalent to $\im(\pi_{V_f})\not\subset L\cdot\psi_\ur$. Let $0\neq \lambda \in \im(\pi_{V_f})$ and write 
$\lambda  = x\cdot \psi_\cyc + y \cdot \psi_\ur$. Then $x\neq 0$, and the $\grL$-invariant $\grL(V_f)$ of the extension 
$V_f$ is defined to be 
$$
\grL(V_f) = - x^{-1} y \in L.
$$
This is independent of the choice of $\lambda$.

The non-split extension $V_f$ also defines a line $\ell_{V_f} \in H^1(\Qp,L(1))$ (the image of  the boundary map
$L=H^0(\Qp,L)\rightarrow H^1(\Qp, L(1)$). Under the perfect pairing 
$\langle\cdot,\cdot\rangle:H^1(\Qp, L)\times H^1(\Qp,L(1)) \rightarrow H^2(\Qp,L(1)) = L$
of Tate local duality, the lines $\im(\pi_{V_f})$ and $\ell_{V_f}$ are mutual annihilators. So $\grL(V_f)$
can also be expressed in terms of $\langle\psi_\ur,c\rangle$ and $\langle\psi_\cyc,c\rangle$ for
$0\neq c \in \ell_{V_f}$.

The Kummer isomorphism yields an identification
$$
(\varprojlim_{n} \Q_p^\times/(\Q_p^\times)^{p^n})\otimes_\Zp L \isoarrow H^1(\Qp,L(1)).
$$
Then, together with the above identification of $H^1(\Qp,L)$, the pairing $\langle\cdot,\cdot\rangle$ of 
local Tate duality is identified with the usual $L$-linear pairing 
$$
\Hom_\Zp((\varprojlim_{n} \Q_p^\times/(\Q_p^\times)^{p^n}),L) \times 
(\varprojlim_{n} \Q_p^\times/(\Q_p^\times)^{p^n})\otimes_\Zp L \rightarrow L.
$$
So if $0\neq c \in \ell_{V_f}$, then 
$$
\grL(V_f) = \psi_\ur(c)^{-1}\psi_\cyc(c).
$$
Note that the condition that $V_f$ not be crystalline is equivalent to $\ell_{V_f} \not\subset H^1_f(\Qp,L(1))$,
so $\psi_\ur(c)\neq 0$ as $H^1_f(\Qp,L(1))$
is identified with $(\varprojlim_{n} \Z_p^\times/(\Z_p^\times)^{p^n})\otimes_\Zp L$.

\noindent{\it Example.} Suppose $f$ is associated with an elliptic curve $E/\Q$ with split multiplicative reduction at $p$ and 
let $q_E\in \Q_p^\times$ be the Tate period of $E$. Then $V_f = T_pE\otimes_\Zp\Qp$ is the $G_\Qp$-extension associated
to the image of $q_E$ in $H^1(\Qp,\Qp(1))$ under the Kummer map. That is, $\ell_{V_f} = \Qp\cdot q_E \in 
(\varprojlim_{n} \Q_p^\times/(\Q_p^\times)^{p^n})\otimes_\Zp\Qp$, and so
$\grL(V_f) = \log_p q_E/\ord_p(q_E)$.  As the $j$-invariant $j(q_E) = j(E)\in \Q$ of $E$ is algebraic,
$q_E$ is transcendental
by a theorem of Barr\'e-Sirieix, Diaz, Gramain, and Philibert \cite{BDGP}, and so $\log_p q_E\neq 0$.
Therefore, $\grL(V_f)\neq 0$.

\subsection{Iwasawa--Greenberg Selmer groups}  \label{SelmerSection}
Let $f\in S_k(\Gamma_0(N))$ be a newform that is ordinary with respect to an embedding
$\Q(f)\hookrightarrow \bQp$. Let $L\subset \bQp$ be any finite extension of $\Qp$ 
containing the image of $\Q(f)$ and let $\O$ be the ring of integers of $L$.
 Let $T_f\subset V_f$ be a fixed $G_\Q$-stable $\O$-lattice.

Let $\Lambda_\O = \O[\![\Gamma]\!]$. Let 
$\Psi:G_\Q \twoheadrightarrow \Gamma\subset \Lambda_\O^\times$ be the natural 
projection. This is a continuous $\Lambda_\O$-valued character that is unramified away from
$p$ and totally ramified at $p$. Let $\Lambda_\O^*= \Hom_{cts}(\Lambda_\O,\Qp/\Zp)$ be the Pontryagin dual of $\Lambda_\O$. 
This is a discrete $\Lambda_\O$-module via
$r\cdot \vphi(x) = \vphi(rx)$, for $r,x\in \Lambda_\O$ and $\vphi\in\Lambda_\O^*$. We similarly define a $\Lambda_\O$-module
structure on the Pontryagin dual of any $\Lambda_\O$-module.

Put $\CM = T_f\otimes_\O\Lambda_\O^*$, with $G_\Q$-action given by $\rho_f\otimes\Psi^{-1}$.
Let $\CM^+ = T_f^+\otimes_\O\Lambda_\O^*$ and $\CM^- = \CM/\CM^+$. Let $\Sigma$ be any finite set of primes containing $p$. 
Let $S=\Sigma\cup \{\ell\mid  N\}$. Let $\Q_S$ be the maximal extension of $\Q$ unramified outside $S$ and $\infty$, and 
let $G_{S}=\Gal(\Q_S/\Q)$.  Following Greenberg, we define a Selmer group $\Sel^\Sigma_{\Q_\infty,L}(f)$ by
$$
\Sel^\Sigma_{\Q_\infty,L}(f) = \ker\left\{ H^1(G_S,\CM) \rightarrow 
H^1(I_p, \CM^-)^{G_\Qp}\times \prod_{\ell\in S\backslash\Sigma} H^1(I_\ell, \CM)^{G_\Ql}\right\}.
$$
This is a discrete, cofinite $\Lambda_\O$-module. Its Pontryagin dual
$X_{\Q_\infty,L}^\Sigma(f)$ is a finite $\Lambda_\O$-module. We denote by $Ch_{L}^\Sigma(f)$ the 
$\Lambda_\O$-characteristic ideal of $X_{\Q_\infty,L}^\Sigma(f)$; this is a principal ideal.
In general, these all depend on the choice of $T_f$, but if $\bar\rho_f$ is irreducible, then Lemma
\ref{lattice-lem} shows that
$\Sel^\Sigma_{\Q_\infty,L}(f)$ is independent of $T_f$ up to isomorphism, and hence so is $X^\Sigma_{\Q_\infty,L}(f)$. 
In particular, if $\bar\rho_f$ is irreducible, then the ideal $Ch_L^\Sigma(f)$ does not  depend on the choice of $T_f$.  

Furthermore, if $L_1\supset L$ is a finite extension with ring of integers $\O_1\supset \O$, 
then $T_{f,1} = T_f\otimes_\O\O_1$ is a $G_\Q$-stable $\O_1$-lattice in $V_1 = V_f\otimes_LL_1$
and $T_{f,1}^+ = T_f^+\otimes_\O\O_1$. Hence the Selmer group $\Sel_{\Q_\infty,L_1}^\Sigma(f))$,
defined with respect to the lattice $T_{f,1}$, is canonically isomorphic to $\Sel_{\Q_\infty,L}^\Sigma(f)\otimes_\O\O_1$ as a $\Lambda_{\O_1} = \Lambda_\O\otimes_\O\O_1$-module, from which it follows that its Pontryagin dual $X^\Sigma_{\Q_\infty,L_1}(f)$ is isomorphic to 
$X^\Sigma_{\Q_\infty,L}\otimes_\O\O_1$ as a $\Lambda_{\O_1}$-module and therefore
\begin{equation}\label{Ch-basechange-eq}
Ch_{L_1}^\Sigma(f) = Ch_L^\Sigma(f)\cdot\Lambda_{\O_1}.
\end{equation}

The relation between the Selmer groups $\Sel^{\Sigma_1}_{\Q_\infty,L}(f)$ and 
$\Sel^{\Sigma_2}_{\Q_\infty,L}(f)$ with $\Sigma_1 \subset \Sigma_2$ is clear:
$$
\Sel^{\Sigma_1}_{\Q_\infty,L}(f) = \ker\left\{\Sel^{\Sigma_2}_{\Q_\infty,L}(f) \stackrel{res}{\rightarrow}
\prod_{\ell\in S_2\backslash S_1} H^1(I_\ell,\CM)^{G_\Ql}\right\}.
$$
Each $H^1(I_\ell,\CM)^{G_\Ql}$, $\ell\neq p$, is a cotorsion $\Lambda_\O$-module, and the $\Lambda_\O$-characteristic ideal
of its Pontryagin dual is generated by $P_\ell(\Psi^{-1}\eps^{-1}(\frob_\ell))$, where
$$
P_\ell(X)  = \det(1-\rho_f(\frob_\ell)\mid V_{f,I_\ell})
$$ 
with $V_{f,I_\ell}$ being the space of $I_\ell$-coinvariants of the representation $V_f$. 
In particular, $X^{\Sigma_2}_{\Q_\infty,L}(f)$ is a torsion $\Lambda_\O$-module if and only if
$X^{\Sigma_1}_{\Q_\infty,L}(f)$ is, and 
$$
Ch^{\Sigma_2}_L(f) \supseteq Ch^{\Sigma_1}_L(f)\cdot \prod_{\ell\in\Sigma_2\backslash\Sigma_1} (P_\ell(\Psi^{-1}\eps^{-1}(\frob_\ell)).
$$
Later, we shall see that this last inclusion is often an equality.

If $\Sigma=\{p\}$ then we will omit it from our notation, writing $\Sel_{\Q_\infty,L}(f)$, $X_{\Q_\infty,L}(f)$, and $Ch_L(f)$ instead.

The following lemma shows that if $\Sigma$ is large enough and that if $\bar\rho_f$ is irreducible,
then $\Sel^\Sigma_{\Q_\infty,L}[p^m]$ and $X^\Sigma_{\Q_\infty,L}(f)/p^mX^\Sigma_{\Q_\infty,L}(f)$
depend only on the pair $(T_f/p^mT_f,T_f^+/p^mT_f^+)$ (up to isomorphism).
\begin{lem}\label{Selmer-modpm}
Suppose $\Sigma\supset\{\ell\mid  N\}$ and that $\bar\rho_f$ is irreducible.
Then the inclusion $\CM[p^m]\subset \CM$ induces an identification  
$$
\Sel^\Sigma_{\Q_\infty,L}(f)[p^m] = 
\ker\left\{H^1(G_S,\CM[p^m])\stackrel{res}{\rightarrow} H^1(I_p,\CM^-[p^m])\right\}.
$$
\end{lem}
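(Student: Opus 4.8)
The plan is to compare the defining exact sequence for $\Sel^\Sigma_{\Q_\infty,L}(f)[p^m]$ with the analogous kernel for $\CM[p^m]$, and to show that the local conditions match after taking $p^m$-torsion. Since $\Sigma \supset \{\ell \mid N\}$, we have $S = \Sigma$, so the only local condition other than the one at $p$ in the definition of $\Sel^\Sigma_{\Q_\infty,L}(f)$ is the trivial one: for $\ell \in S \setminus \Sigma = \emptyset$ there are no conditions. Thus $\Sel^\Sigma_{\Q_\infty,L}(f) = \ker\{H^1(G_S,\CM) \to H^1(I_p,\CM^-)^{G_{\Qp}}\}$, and we must identify its $p^m$-torsion with $\ker\{H^1(G_S,\CM[p^m]) \to H^1(I_p,\CM^-[p^m])\}$.

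First I would analyze the global cohomology. The short exact sequence $0 \to \CM[p^m] \to \CM \xrightarrow{p^m} \CM \to 0$ gives $0 \to H^0(G_S,\CM)/p^m \to H^1(G_S,\CM[p^m]) \to H^1(G_S,\CM)[p^m] \to 0$. The key point is that $H^0(G_S,\CM) = 0$: indeed $\CM = T_f \otimes_\O \Lambda_\O^*$ with $G_\Q$ acting by $\rho_f \otimes \Psi^{-1}$, and $\bar\rho_f$ is irreducible, hence nonzero and without $G_\Q$-invariant sub/quotient lines; since $\CM$ is a successive extension of copies of the residual representation (as a $\kap$-vector space) twisted by powers, irreducibility of $\bar\rho_f$ forces $\CM^{G_\Q} = 0$, and the same then holds over $G_S$. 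Therefore $H^1(G_S,\CM[p^m]) \xrightarrow{\sim} H^1(G_S,\CM)[p^m]$.

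Next I would handle the local condition at $p$. I want $H^1(I_p,\CM^-[p^m])^{G_{\Qp}}$ to receive $H^1(I_p,\CM^-)^{G_{\Qp}}[p^m]$ isomorphically, compatibly with restriction from the global side. Again using $0 \to \CM^-[p^m] \to \CM^- \xrightarrow{p^m} \CM^- \to 0$, one gets $H^0(I_p,\CM^-)/p^m \hookrightarrow H^1(I_p,\CM^-[p^m]) \twoheadrightarrow H^1(I_p,\CM^-)[p^m]$; here $\CM^-$ need not have vanishing $I_p$-invariants (it is unramified-by-$\Psi^{-1}$, so $I_p$ acts through $\Psi^{-1}$, which is ramified, but $\CM^- = T_f^- \otimes \Lambda_\O^*$ is large), so I instead argue directly at the level of the map. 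The commutative diagram with rows the $p^m$-torsion sequences and columns $H^1(G_S,-) \to H^1(I_p,\CM^-[-])$ lets me conclude by a diagram chase: an element of $\Sel^\Sigma_{\Q_\infty,L}(f)[p^m]$ lifts (via the global iso just proved) to a unique class in $H^1(G_S,\CM[p^m])$ whose restriction to $H^1(I_p,\CM^-[p^m])$ maps to $0$ in $H^1(I_p,\CM^-)[p^m]$, i.e.\ lands in the image of $H^0(I_p,\CM^-)/p^m$; the point is to show this forces it to be $0$ in $H^1(I_p,\CM^-[p^m])$, i.e.\ that the connecting map $H^0(I_p,\CM^-) \to H^1(I_p,\CM^-[p^m])$ is zero, equivalently that $H^0(I_p,\CM^-) = H^0(I_p,\CM^-)/p^m$ is hit by $H^0(I_p, \CM^-) \xrightarrow{p^m}$, i.e.\ that $H^0(I_p,\CM^-)$ is $p$-divisible. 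Since $\CM^-$ is a cofree $\O$-module ($T_f^- \otimes_\O \Lambda_\O^*$ with $\Lambda_\O^*$ $\O$-divisible) and $I_p$-invariants of a divisible module are divisible, this holds. So the connecting map vanishes and the two kernels agree.

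The main obstacle I anticipate is the local bookkeeping at $p$: one must be careful that $\CM^-[p^m] = (T_f^-/p^m) \otimes (\text{something})$ really is the naive truncation and that $H^0(I_p,\CM^-)$ is $\O$-divisible despite $I_p$ acting nontrivially through $\Psi^{-1}$ — this is where one uses that $\Lambda_\O^*$ is an injective (hence divisible) $\O$-module and that invariants commute with the relevant limits. The global vanishing $H^0(G_S,\CM)=0$ is the clean input where irreducibility of $\bar\rho_f$ enters, and it is genuinely needed: without it the term $H^0(G_S,\CM)/p^m$ would obstruct the identification. Everything else is a routine chase in the snake-lemma diagram relating the $p^m$-torsion sequences for $\CM$, $\CM^-$ over $G_S$ and over $I_p$.
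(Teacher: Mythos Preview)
Your overall architecture matches the paper's proof: use irreducibility of $\bar\rho_f$ to get $H^1(G_S,\CM[p^m])\isoarrow H^1(G_S,\CM)[p^m]$, then show the connecting map $H^0(I_p,\CM^-)/p^m \to H^1(I_p,\CM^-[p^m])$ vanishes so that the two local kernels at $p$ agree. The issue is the justification of this last step.

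You assert that $(\CM^-)^{I_p}$ is $p$-divisible because ``$I_p$-invariants of a divisible module are divisible.'' That general claim is false. A simple counterexample: let $M=\Qp/\Zp$ with a generator $g$ acting by multiplication by $1+p$; then $M$ is divisible but $M^g=\{x:px=0\}\cong\Z/p\Z$. So divisibility of $\CM^-$ as an $\O$-module does not by itself force divisibility of its $I_p$-invariants, and your later remark about $\Lambda_\O^*$ being injective as an $\O$-module does not repair this either (injectivity controls quotients, not subobjects).

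What actually makes $(\CM^-)^{I_p}$ divisible here is an explicit computation using the specific action. The inertia group $I_p$ acts on $\CM^-\cong\Lambda_\O^*$ through $\Psi^{-1}$, and since $p$ is totally ramified in $\Q_\infty/\Q$ the map $\Psi|_{I_p}:I_p\to\Gamma$ is surjective. Hence $(\CM^-)^{I_p}=(\Lambda_\O^*)^\Gamma$, which is the Pontryagin dual of $\Lambda_\O/(\gamma-1)\Lambda_\O=\O$, i.e.\ $(\CM^-)^{I_p}\cong\Hom_{cts}(\O,\Qp/\Zp)\cong L/\O$. This is $p$-divisible, so $(\CM^-)^{I_p}/p^m(\CM^-)^{I_p}=0$ and the connecting map vanishes as you wanted. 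Once you replace the false general principle with this computation, your argument is complete and coincides with the paper's.
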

\noindent That the dependence is only on the pair $(T_f/p^mT_f,T_f^+/p^mT_f^+)$ follows since
$\CM[p^m] \cong T_f/p^mT_f\otimes_\O \Lambda_\O^*[p^m]$,
$\CM^+[p^m]\cong T_f^+/p^mT_f^+\otimes_\O \Lambda_\O^*[p^m]$, and
$\CM^-[p^m] = \CM[p^m]/\CM^+[p^m]$.

\begin{proof}
Since $\bar\rho_f$ is irreducible, the inclusion
$\CM[p^m]\hookrightarrow \CM$ induces an identification 
$H^1(G_\Sigma, \CM[p^m]) = H^1(G_\Sigma,\CM)[p^m]$. So
$\Sel^\Sigma_{\Q_\infty,L}(f)[p^m]$ is the kernel of the restriction map
$H^1(G_\Sigma,\CM[p^m])\rightarrow
H^1(I_p,\CM^-)$, which factors through the restriction map $H^1(G_\Sigma,\CM[p])\rightarrow
H^1(I_p,\CM^-[p^m])$. The kernel 
of the natural map $H^1(I_p,\CM^-[p^m])\rightarrow H^1(I_p,\CM^-)$ is the image of 
$(\CM^-)^{I_p}/p^m(\CM^-)^{I_p}$ via the boundary map. But $(\CM^-)^{I_p} \cong 
\Hom_{cts}(\O,\Qp/\Zp)$ since $I_p$ acts via $\Psi^{-1}$ on $\CM^- \cong \Lambda_\O^*$, and
so $(\CM^-)^{I_p}/p^m(\CM^-)^{I_p}=0$ as $\Hom_{cts}(\O,\Qp/\Zp)$ is $p$-divisible.
\end{proof}

The key to our proofs of both Theorems \ref{thmA} and \ref{thmB} is an understanding of the images
of the restriction maps
\begin{equation}\label{restrict-eq1}
H^1(G_S,\CM) \stackrel{res}{\rightarrow}
H^1(\Qp,\CM^-) \times \prod_{\ell\in S,\ell\neq p} H^1(I_\ell,\CM)^{G_\Ql}
\end{equation}
and
\begin{equation}\label{restrict-eq2}
H^1(G_S,\CM) \stackrel{res}{\rightarrow}
H^1(I_p,\CM^-)^{G_\Qp}\times \prod_{\ell\in S,\ell\neq p} H^1(I_\ell,\CM)^{G_\Ql},
\end{equation}
where $S \supset\{\ell\mid  Np\}$ is any finite set of primes. The kernel of \eqref{restrict-eq2} is, of course,
just $\Sel_{\Q_\infty,L}(f)$. We denote the kernel of \eqref{restrict-eq1} by $\CS$ (it is independent of $S$) 
and let $\CX$ be its Pontryagin dual. As $\CS$ is a submodule of each $\Sel^\Sigma_{\Q_\infty,L}(f)$,
$\CX$ is a quotient of each $X^\Sigma_{\Q_\infty,L}(f)$.

The next two propositions record some properties of the above restriction
maps.
The ideas behind the proofs of these propositions 
are due to Greenberg (see especially \cite[\S\S3,4]{Gr-CIME}, \cite{Gr-sur}, and \cite{Gr-nopseudonull}). As there is not
a convenient reference for the exact case considered here, we have included the details of the arguments.

\begin{prop}\label{Selmerstructureprop1}
Suppose $k\equiv 2 \pmod{p-1}$, $\bar\rho_f$ is irreducible, and $X_{\Q_\infty,L}(f)$ is a torsion $\Lambda_\O$-module.
The restriction maps \eqref{restrict-eq1} and \eqref{restrict-eq2} are surjective.
\end{prop}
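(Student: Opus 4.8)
The plan is to prove that \eqref{restrict-eq1} is surjective and then to deduce the surjectivity of \eqref{restrict-eq2}. First, two reductions. Since \eqref{restrict-eq2} is \eqref{restrict-eq1} followed by the identity on the factors at $\ell\neq p$ and by the natural map $H^1(\Qp,\CM^-)\to H^1(I_p,\CM^-)^{G_\Qp}$, which is surjective because $G_\Qp/I_p$ has $p$-cohomological dimension one (so the inflation--restriction obstruction $H^2(G_\Qp/I_p,(\CM^-)^{I_p})$ vanishes), it is enough to treat \eqref{restrict-eq1}. For the same reason $H^1(\Q_\ell,\CM)\twoheadrightarrow H^1(I_\ell,\CM)^{G_\Ql}$ for $\ell\neq p$, and also $H^1(\Qp,\CM)\twoheadrightarrow H^1(\Qp,\CM^-)$: the cokernel of the latter injects into $H^2(\Qp,\CM^+)$, which by local Tate duality is dual to the $G_\Qp$-invariants of $\Hom(\CM^+,\mu_{p^\infty})$, and these vanish because $\Hom(\CM^+,\mu_{p^\infty})$ is free of rank one over the domain $\Lambda_\O$ and $G_\Qp$ acts on it through a character that is nontrivial on $I_p$ (as $\Psi$ is totally ramified at $p$ while $\Gamma\cap\O^\times=\{1\}$ inside $\Lambda_\O^\times$). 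Hence the target of \eqref{restrict-eq1} is genuinely the sum of local quotients $H^1(\Q_v,\CM)/\CF_v$, $v\in S$, with $\CF_p=\ker(H^1(\Qp,\CM)\to H^1(\Qp,\CM^-))$ and $\CF_\ell=\ker(H^1(\Q_\ell,\CM)\to H^1(I_\ell,\CM))$, and $\CS$ is the corresponding Selmer group.

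Next I would bring in global duality. Put $\CT=\Hom(\CM,\mu_{p^\infty})$; this is a free $\Lambda_\O$-module of rank two, isomorphic to $T_f^\vee(1)\otimes_\O\Lambda_\O$ with $G_\Q$ acting by $\rho_f^\vee(1)\otimes\Psi$, and modulo the maximal ideal of $\Lambda_\O$ it is the irreducible representation $\bar\rho_f^\vee(1)$. The same ``nontrivial character over a domain'' argument gives $H^0(\Q_v,\CT)=0$ for all $v\in S$ (at $v=p$ inertia acts nontrivially; at $v=\ell\neq p$, $\Psi(\frob_\ell)\in\Gamma$ has infinite order in $\Lambda_\O^\times$ and so is not a root of the characteristic polynomial of $\rho_f^\vee(1)(\frob_\ell)$ acting on $\CT^{I_\ell}$), whence $H^2(\Q_v,\CM)\cong H^0(\Q_v,\CT)^\vee=0$; and $H^0(G_S,\CT)=0$ since $\bar\rho_f$ is irreducible. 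Together with $H^2(G_S,\CM)=0$ --- the weak Leopoldt conjecture for the cyclotomic $\Zp$-extension, known in this setting --- the nine-term Poitou--Tate sequence shows that $H^1(G_S,\CM)\to\bigoplus_{v\in S}H^1(\Q_v,\CM)$ has cokernel $H^1(G_S,\CT)^\vee$. Quotienting by $\bigoplus_v\CF_v$ and dualizing then identifies the cokernel of \eqref{restrict-eq1} with the Pontryagin dual of the ``strict'' dual Selmer group
$$
Y\ :=\ \ker\Bigl(H^1(G_S,\CT)\ \longrightarrow\ H^1\bigl(\Qp,\Hom(\CM^+,\mu_{p^\infty})\bigr)\times\prod_{\ell\in S,\ \ell\neq p}H^1(I_\ell,\CT)\Bigr),
$$
i.e. the Greenberg Selmer group of $\CT$ for the natural ordinary filtration (whose ``good'' submodule is $\Hom(\CM^-,\mu_{p^\infty})$). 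So it remains to prove $Y=0$.

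This I would do by showing $Y$ is simultaneously $\Lambda_\O$-torsion and $\Lambda_\O$-torsion-free. For torsion: from the exact sequence $0\to\CS\to H^1(G_S,\CM)\to\bigoplus_{v\in S}H^1(\Q_v,\CM)/\CF_v\to Y^\vee\to0$, computing $\Lambda_\O$-coranks gives $0-1+1-\rank_{\Lambda_\O}Y=0$. Here $\mathrm{corank}_{\Lambda_\O}H^1(G_S,\CM)=1$ by the global Euler characteristic formula (using $H^0=H^2=0$ and the one-dimensional $(-1)$-eigenspace of complex conjugation on $V_f$); $\mathrm{corank}_{\Lambda_\O}H^1(\Qp,\CM^-)=1$ by the local Euler characteristic at $p$ (both $H^0(\Qp,\CM^-)$ and $H^2(\Qp,\CM^-)$ have corank $0$); the terms at $\ell\neq p$ are cotorsion; and $\mathrm{corank}_{\Lambda_\O}\CS=\mathrm{corank}_{\Lambda_\O}\Sel_{\Q_\infty,L}(f)=\rank_{\Lambda_\O}X_{\Q_\infty,L}(f)=0$ by hypothesis --- the first equality because $\CS$ and $\Sel_{\Q_\infty,L}(f)$ differ by a subquotient of the cotorsion module $H^1(G_\Qp/I_p,(\CM^-)^{I_p})$. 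For torsion-freeness: $Y$ is a submodule of $H^1(G_S,\CT)$, and the latter has no $\Lambda_\O$-torsion because $H^0(G_S,\CT/r\CT)=0$ for every nonzero $r\in\Lambda_\O$ --- localizing at the (height-one) associated primes $\grp$ of $\Lambda_\O/r$, the residual $G_S$-representation is a twist of $\bar\rho_f^\vee(1)$ (if $\grp=(\varpi)$) or of $V_f^\vee(1)$ over a finite extension of $L$ (otherwise), hence irreducible with no invariants, and a nonzero module over the relevant Artinian local ring would have a nonzero socle --- so $H^1(G_S,\CT)[r]$, a quotient of $H^0(G_S,\CT/r\CT)$, vanishes. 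A $\Lambda_\O$-module that is both torsion and torsion-free is zero, so $Y=0$, and \eqref{restrict-eq1} --- hence \eqref{restrict-eq2} --- is surjective.

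I expect the main work to be the global-duality bookkeeping: identifying the cokernel of \eqref{restrict-eq1} precisely with $Y^\vee$ --- getting the local orthogonal complements exactly right and using $H^2(G_S,\CM)=0$ so that the identification is an isomorphism and not merely a surjection --- together with the accompanying corank count, in particular checking that $\CS$ and $\Sel_{\Q_\infty,L}(f)$ have the same $\Lambda_\O$-corank and that the local Euler characteristic at $p$ contributes exactly $1$. The irreducibility of $\bar\rho_f$ is what makes $H^1(G_S,\CT)$ torsion-free; the role of the further hypothesis $k\equiv2\pmod{p-1}$ (shared with the companion proposition) is, as in Greenberg's arguments, in controlling the local behavior at $p$ so that the local cohomology groups there have the expected $\Lambda_\O$-coranks.
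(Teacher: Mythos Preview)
Your approach---identify the cokernel of \eqref{restrict-eq1} with the dual of a compact Selmer group for $\CT=\Hom(\CM,\mu_{p^\infty})$, show that group is $\Lambda_\O$-torsion-free and then $\Lambda_\O$-torsion via an Euler-characteristic corank count---is genuinely different from the paper's. The paper instead specializes at $x=\gamma-u^m$ and uses the hypothesis $k\equiv2\pmod{p-1}$ to identify the specialization $M_x$ with $\CM[y]$ for a suitable $y$, reducing to finiteness of $\Sel^S_{\Q_\infty,L}(f)[y]$ for some $y$ (which follows from the torsion hypothesis on $X_{\Q_\infty,L}(f)$). Your route, if carried out correctly, never uses $k\equiv2\pmod{p-1}$ at all---your closing remark about its role is not accurate for your own argument---so it would prove a slightly stronger statement.

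There is, however, a real slip in your duality bookkeeping at $\ell\neq p$. Because $\Psi(\frob_\ell)\in\Gamma$ has infinite order, $\frob_\ell-1$ is \emph{surjective} on $\CM^{I_\ell}$, so $\CF_\ell=H^1(\F_\ell,\CM^{I_\ell})=0$. The orthogonal complement of $\CF_\ell=0$ under local Tate duality is therefore \emph{all} of $H^1(\Q_\ell,\CT)$, not just the unramified classes. Thus the dual Selmer group controlling the cokernel is the paper's $\Sel^S(\CN)$ (no condition at $\ell\neq p$), not your $Y$ with its unramified conditions; your $Y$ is a proper subgroup in general, and proving $Y=0$ does not yield $C=0$. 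The fix is simply to take the correct dual conditions; your torsion-free and corank arguments then apply verbatim to $\Sel^S(\CN)$.

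Second, you invoke ``$H^2(G_S,\CM)=0$ --- the weak Leopoldt conjecture, known in this setting'' as an external input. In fact this follows from the hypotheses: the torsion assumption on $X_{\Q_\infty,L}(f)$ gives $\mathrm{corank}_{\Lambda_\O}H^1(G_S,\CM)\le 1$ (bounded by corank of Selmer plus corank of the target of \eqref{restrict-eq2}), and the global Euler characteristic then forces $\mathrm{corank}_{\Lambda_\O}H^2(G_S,\CM)=0$; since the local $H^2$'s vanish, $H^2(G_S,\CM)=\Sha^2(\CM)$ is Pontryagin dual to $\Sha^1(\CT)\subset H^1(G_S,\CT)$, which is torsion-free by your own argument, hence zero. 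You should make this deduction explicit rather than appeal to an outside theorem.
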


\begin{proof} As $H^1(\Qp,\CM^-) \twoheadrightarrow H^1(I_p,\CM^-)^{G_\Qp}$, 
\eqref{restrict-eq2} is surjective if \eqref{restrict-eq1} is. That is, to prove the proposition it suffices to prove surjectivity of
\eqref{restrict-eq1}. To establish this surjectivity we introduce some auxiliary Selmer groups.

Let $\CN = \Hom_\O(T_f,\O(1))\otimes_\O\Lambda_\O$, with $G_\Q$-action 
given by $\eps\rho_f^\vee\otimes\Psi$, and let $\CN^+ = \Hom_\O(T_f/T_f^+,\O(1))\otimes_\O\Lambda_\O$,
which is $G_\Qp$-stable with $G_\Qp$ acting via $\alpha_f^{-1}\eps\otimes\Psi$. These are free $\Lambda_\O$-modules,
and $\CN^+$ is a $\Lambda_\O$-direct summand of $\CN$. Let $\CN^- = \CN/\CN^+$.
The pairing
$$
(\,,\,):\CM \times \CN \rightarrow \Qp/\Zp, \ \ \ 
(t\otimes\vphi,\phi\otimes r) = \vphi(\phi(t)\cdot r),
$$
is a $G_\Q$-equivariant perfect pairing under which $\CM^+$ and $\CN^+$ are mutual annihilators.
Under the induced (perfect) local Tate pairing 
$$
H^i(\Qp,\CM) \otimes H^{2-i}(\Qp,\CN)\rightarrow \Qp/\Zp,
$$
$L_p(\CM)=\im\{H^1(\Qp,\CM^+)\rightarrow H^1(\Qp,\CM)\}$ and 
$L_p(\CN) =\im\{H^1(\Qp,\CN^+)\rightarrow H^1(\Qp,N)\}$ are also mutual annihilators. 
Let
$$
\Sel^S(\CN) = \ker\left\{H^1(G_S, \CN) \stackrel{res}{\rightarrow} H^1(\Qp,\CN)/L_p(\CN)\hookrightarrow H^1(\Qp,\CN^-)\right\}.
$$
Let $\Sha^1(\Q,S,\CN) \subseteq \Sel^\Sigma(\CN)$ consist of those classes that are trivial at all places in $S$. 

For $\ell\neq p$, $H^1(\F_\ell,\CM^{I_\ell})=0$ and so $H^1(\Q_\ell,\CM)\isoarrow H^1(I_\ell,\CM)^{G_\Ql}$. 
Also, $H^2(\Qp,\CM^+)=0$ as its dual is $H^0(\Qp,\CN^-)=0$, so $H^1(\Qp,\CM)/L_p(\CM)\isoarrow H^1(\Qp,\CM^-)$.
Global Tate duality then identifies the dual of the cokernel of \eqref{restrict-eq1} with 
$\Sel^S(\CN)/\Sha^1(\Q,S,\CN)$ (cf.~\cite[Prop.~3.1]{Gr-sur}).  
As $\bar\rho_f$ is irreducible, $H^1(G_S, \CN)$ is $\Lambda_\O$-torsion-free, and hence
so are $\Sel^S(\CN)$ and $\Sha^1(\Q,S,\CN)$. Therefore, to prove the desired surjectivity it suffices to show that
$\Sel^S(\CN)$ is a torsion $\Lambda_\O$-module (and so trivial). We prove that $\Sel^S(\CN)$ is torsion by 
exhibiting elements $x$ in the maximal ideal of $\Lambda_\O$ such that $\Sel^S(\CN)/x\Sel^S(\CN)$ has finite order.

Let $x = \gamma - u^m\in \Lambda_\O$ with $m$ an integer. Let $N_x=\CN/x\CN$, $N^+_x = \CN^+/x\CN^+$,
and $N_x^- = N_x/N_x^+$.  These are free $\O$-modules. If $p\nmid N$ or $m\neq 0$, then the natural injection 
$$
H^1(G_S,\CN)/xH^1(G_S,\CN) \hookrightarrow H^1(G_S,N_x)
$$ 
induces an injection
\begin{equation}\label{Selinjecteq1}
\Sel^S(\CN)/x\Sel^S(\CN)\hookrightarrow \Sel^S(N_x) = \ker\left\{H^1(G_S, N_x) \rightarrow H^1(\Qp,N_x^-)\right\}.
\end{equation}
For this, we first note that the image of the induced map from $\Sel^S(\CN)/x\Sel^S(\CN)$ to 
$H^1(G_S,N_x)$ lies in $\Sel^S(N_x)$. It remains to prove injectivity. Let $c\in \Sel^S(\CN)$ be such that it has
trivial image in $\Sel^S(N_x)$. Then $c = xd$ for some 
$d\in H^1(G_S,\CN)$ such that $xd = 0$ in $H^1(\Qp,\CN^-)$. The kernel of multiplication by $x$ on $H^1(\Qp,\CN^-)$
is the image of $H^0(\Qp,N_x^-)$.  But $N_x^-$ is a free $\O$-module with $G_\Qp$ acting via the character $\alpha_f\eps^{2-k+m}\omega^{-m}$, and so $H^0(\Qp,N_x^+)=0$ unless $m=k-2$ and $\alpha_f=1$. But $\alpha_f=1$ only if $p\mid \mid N$ and $k=2$.
It follows that that if $p\nmid N$ or $m\neq 0$, then multiplication by $x$ is injective on $H^1(\Qp,\CN^-)$ and,
therefore, $d\in \Sel^S(\CN)$,
proving the injectivity in \eqref{Selinjecteq1}.

From \eqref{Selinjecteq1} it follows that to prove $\Sel^S(\CN)$ is torsion it suffices to show that there is some $m\neq 0$ such that 
$\Sel^S(N_x)$ has finite order.  As $\Sel^S(N_x)$ has finite order if and only if $\Sel^S(N_x)\otimes_\Zp \Qp/\Zp$
has finite order - in which case it must be trivial -  it suffices to prove the latter.  
Furthermore, as $\bar\rho_f$ is irreducible and so $H^1(G_S,N_x)$ - 
and hence also $\Sel^S(N_x)$ - is 
a torsion-free $\O$-module and therefore free, it would then follow that $\Sel^S(N_x) = 0$.

Let $M_x = N_x\otimes_\Zp\Qp/\Zp$ and $M_x^ - = N_x^- \otimes_\Zp \Qp/\Zp$.  From the injections
$H^1(G_S,N_x)\otimes_\Zp \Qp/\Zp\hookrightarrow H^1(G_S M_x)$ and $H^1(\Qp,N_x^-)\otimes_\Zp \Qp/\Zp 
\hookrightarrow H^1(\Qp,M_x^-)$ we deduce an injection
\begin{equation}\label{Selinjecteq2}
\Sel^S(N_x)\otimes_\Zp \Qp/\Zp \hookrightarrow \Sel^S(M_x) = \ker\left\{ H^1(G_S,M_x) 
\stackrel{res}{\rightarrow} H^1(\Qp,M_x^-)\right\}.
\end{equation}
So to prove that there is an $m\neq 0$ such that $\Sel^S(N_x)$ has finite order, it suffices to find such an $m$ for which
$\Sel^S(M_x)$ has finite order.

Let $m\neq 0$ be an integer such that $m\equiv 0 \pmod{p-1}$. 
Let $ y = \gamma - u^{k-2-m}$. Then, as $k\equiv 2\pmod{p-1}$, 
$M_x \cong \CM[y]$ as $\O[G_\Q]$-modules, and the isomorphism can be chosen so that $M_x^-$ is identified with 
$\CM^-[y]$. It follows that 
\begin{equation}\label{Selinjecteq3}
\Sel^S(M_x) = \Sel^S(\CM[y]) \hookrightarrow \Sel^S_{\Q_\infty,L}(f)[y],
\end{equation}
where $\Sel^S(\CM[y])$ is defined just as $\Sel^S(M_x)$, and where
the injection is induced by the natural identification $H^1(G_S, \CM[y])\isoarrow H^1(G_S, \CM)[y]$
(which is injective as $\bar\rho_f$ is irreducible). 

As $X_{\Q_\infty,L}(f)$ is a torsion $\Lambda_\O$-module so is $X^S_{\Q_\infty,L}(f)$. Therefore, for all but finitely many integers $m$, 
$X^S_{\Q_\infty,L}(f)/yX^S_{\Q_\infty,L}(f)$
has finite order. As the latter is dual to $\Sel^S_{\Q_\infty,L}(f)[y]$, it follows from \eqref{Selinjecteq3} that there is an $m\neq 0$
with $m\equiv 0\pmod{p-1}$
such that $\Sel^S(M_x)$ has finite order. As explained above, the existence of such an $x$ implies the desired surjectivity of 
\eqref{restrict-eq1}.
\end{proof}

\begin{prop}\label{Selmerstructureprop2}
Suppose $k\equiv 2 \pmod{p-1}$, $\bar\rho_f$ is irreducible, and $X_{\Q_\infty,L}(f)$ is a torsion $\Lambda_\O$-module.
\begin{itemize}
\item[\rm (i)] $\CX$ has no non-zero finite-order $\Lambda_\O$-submodules.
\item[\rm (ii)] Let $\Sigma$ be any finite set of primes containing $p$. Then 
$X^\Sigma_{\Q_\infty,L}(f)$ has no non-zero finite-order $\Lambda_\O$-submodules.
\end{itemize}
\end{prop}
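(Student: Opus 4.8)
The plan is to deduce (ii) from (i), and to prove (i) by combining Proposition~\ref{Selmerstructureprop1} with global (Poitou--Tate) duality. Two elementary facts about finitely generated $\Lambda_\O$-modules are used throughout: (a) having no nonzero finite $\Lambda_\O$-submodule is inherited by submodules and by extensions (in $0\to B\to A\to C\to 0$ a finite submodule of $A$ meets $B$ either trivially, so embeds in $C$, or nontrivially, producing a finite submodule of $B$); and (b) a module of projective dimension $\le 1$ has no nonzero finite submodule, since by Auslander--Buchsbaum its depth over the two-dimensional regular local ring $\Lambda_\O$ is $\ge 1$.

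For (ii): the surjectivity of \eqref{restrict-eq1} established in Proposition~\ref{Selmerstructureprop1} identifies the cokernel of $\CS\hookrightarrow\Sel^\Sigma_{\Q_\infty,L}(f)$ with
$$
\Sel^\Sigma_{\Q_\infty,L}(f)/\CS\ \isom\ \Bigl(\prod_{\ell\in\Sigma\setminus\{p\}}H^1(I_\ell,\CM)^{G_\Ql}\Bigr)\times\ker\bigl(H^1(\Qp,\CM^-)\to H^1(I_p,\CM^-)^{G_\Qp}\bigr),
$$
so dualizing gives an exact sequence $0\to Q^\vee\to X^\Sigma_{\Q_\infty,L}(f)\to\CX\to 0$ in which $Q^\vee$ is the direct sum of the Pontryagin duals of the local groups on the right. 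For $\ell\ne p$ this dual is the cokernel of a square matrix over $\Lambda_\O$ whose determinant $P_\ell(\Psi^{-1}\eps^{-1}(\frob_\ell))$ is nonzero (acting on the $I_\ell$-coinvariants of $V_f$), hence has projective dimension $\le 1$; and by inflation--restriction $\ker\bigl(H^1(\Qp,\CM^-)\to H^1(I_p,\CM^-)^{G_\Qp}\bigr)\isom (\CM^-)^{I_p}/(\frob_p-1)$, where $(\CM^-)^{I_p}\isom L/\O$ with $\frob_p$ acting by $a_p(f)$; since $L/\O$ is divisible this is $0$ unless $a_p(f)=1$ and $\isom L/\O$ when $a_p(f)=1$, so its dual is $0$ or $\isom\O=\Lambda_\O/(\gamma-1)$. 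In every case $Q^\vee$ has no nonzero finite $\Lambda_\O$-submodule, so by fact (a), (ii) follows from (i).

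For (i): the argument in the proof of Proposition~\ref{Selmerstructureprop1} shows $\Sel^S(\CN)$ is $\Lambda_\O$-torsion, and it is torsion-free because $\bar\rho_f$ is irreducible, hence $\Sel^S(\CN)=0$, and likewise $\Sha^1(\Q,S,\CN)=0$. Feeding this into the Poitou--Tate exact sequence for $\CN$ with the local conditions dual to those defining $\CS$ (cf.~\cite[\S3]{Gr-sur}) yields
$$
0\to H^1(G_S,\CN)\xrightarrow{\ \phi\ }H^1(\Qp,\CN)/L_p(\CN)\xrightarrow{\ \psi\ }\CX\to\CR\to 0,
$$
where $\CR$ is a subquotient of $\Sha^1(\Q,S,\CM)^\vee$. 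The crucial point is that $H^1(G_S,\CN)$ is a \emph{free} $\Lambda_\O$-module: since $\CN$ is finitely generated and free over $\Lambda_\O$, the continuous $G_S$-cohomology of $\CN$ is computed by a perfect complex of $\Lambda_\O$-modules in degrees $[0,2]$; its base change along $\Lambda_\O\to\kappa$ computes the cohomology of (the dual of) the irreducible representation $\bar\rho_f$, which has no invariants, so $H^0=0$ and the complex is represented by $[\Lambda_\O^a\xrightarrow{d}\Lambda_\O^b]$ in degrees $1,2$. Then $H^1(G_S,\CN)=\ker d$ is the kernel of a map from a finite free module onto the torsion-free module $\im d\subseteq\Lambda_\O^b$, so it has depth $2$, i.e.\ is free, of rank one by a rank count. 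The same perfect-complex argument, using the hypothesis $k\equiv2\pmod{p-1}$ to check $H^0(\Qp,\CN^-)=H^2(\Qp,\CN^-)=0$ (the characters on $\CN^-$ and on $(\CN^-)^\vee(1)$ involve $\omega^{k-1}=\omega$, which is nontrivial on $I_p$), shows $H^1(\Qp,\CN^-)$ is free of rank one. Consequently $H^1(\Qp,\CN)/L_p(\CN)=\im\bigl(H^1(\Qp,\CN)\to H^1(\Qp,\CN^-)\bigr)$ is a nonzero ideal of $\Lambda_\O\isom H^1(\Qp,\CN^-)$, and $\phi$ realizes $H^1(G_S,\CN)\isom\Lambda_\O$ as a nonzero principal subideal $(g)$. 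Hence $\im(\psi)=\coker(\phi)$ embeds into $\Lambda_\O/(g)$, which has depth one and so no nonzero finite submodule; applying the same circle of ideas to $H^2(G_S,\CN)=\coker d$ and the explicitly computed, $\Lambda_\O$-torsion local groups $H^2(\Qp,\CN)$ and $H^2(\Q_\ell,\CN)$ shows $\CR$ has none either. By fact (a), $\CX$ then has no nonzero finite $\Lambda_\O$-submodule.

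The step I expect to be the main obstacle is the control of the term $\CR$: verifying that the finite ``defect'' modules produced when one resolves the global complex $C^\bullet(G_S,\CN)$ and the local complexes $C^\bullet(\Q_v,\CN)$ cancel against one another under global duality. This must be combined with the bookkeeping at the non-crystalline place $p$ in the split multiplicative case $a_p(f)=1$, where $(\CM^-)^{I_p}$, $(\CM^-)^{G_\Qp}$ and $H^2(\Qp,\CN^+)$ acquire a positive $\O$-corank (they turn out to be copies of $\O$, which is harmless). These are precisely the points addressed by Greenberg's no-pseudo-null arguments in \cite{Gr-nopseudonull} and \cite{Gr-sur}, which one adapts to the case at hand.
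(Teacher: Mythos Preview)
Your strategy is genuinely different from the paper's, and while the overall architecture (Poitou--Tate plus depth/projective-dimension bookkeeping, following Greenberg) is sound, there is a concrete gap in your argument for (i) that happens to bite exactly in the case the paper cares about.

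\textbf{The gap.} Your freeness claim for $H^1(\Qp,\CN^-)$ fails when $\alpha_p\equiv 1\pmod{\grm}$, in particular in the split multiplicative case. Your perfect-complex reduction to degrees $[1,2]$ requires $H^0(\Qp,\CN^-\otimes_{\Lambda_\O}\kappa)=0$, not merely $H^0(\Qp,\CN^-)=0$. But $\CN^-\otimes_{\Lambda_\O}\kappa\cong\kappa(\bar\alpha_f\omega^{2-k})=\kappa(\bar\alpha_f)$ since $k\equiv 2\pmod{p-1}$ forces $\omega^{2-k}=1$; so the residual $H^0$ is nonzero whenever $\bar\alpha_f=1$. (Your parenthetical ``the characters \ldots\ involve $\omega^{k-1}=\omega$'' is correct for $(\CN^-)^\vee(1)$, giving $H^2=0$, but not for $\CN^-$ itself.) One checks directly that $H^1(\Qp,\CN^-)$ then has nontrivial $(\gamma-1)$-torsion: the long exact sequence for $0\to\Lambda_\O(\Psi)\xrightarrow{\gamma-1}\Lambda_\O(\Psi)\to\O\to 0$ injects $H^0(\Qp,\O)=\O$ into $H^1(\Qp,\CN^-)[\gamma-1]$. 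So $H^1(\Qp,\CN^-)$ is not free, your identification of $H^1(\Qp,\CN)/L_p(\CN)$ with an ideal of $\Lambda_\O$ collapses, and with it the claim that $\im(\psi)=\coker(\phi)$ embeds in some $\Lambda_\O/(g)$. While $H^1(\Qp,\CN^-)$ still has projective dimension $\le 1$ (hence no finite submodules), that property does \emph{not} pass to the quotient $\coker(\phi)$, which is what you actually need. This gap can be repaired by a more careful local analysis in the $\alpha_p\equiv 1$ case, but it is real work---essentially the ``bookkeeping at the non-crystalline place $p$'' you flag at the end, and it is not handled by your outline as written.

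\textbf{What the paper does instead.} The paper sidesteps all of this with a short specialization argument on the $\CM$-side. For suitably chosen $x=\gamma-u^m$ one shows, using $\Sel^S(N_x)=0$ (already established inside the proof of Proposition~\ref{Selmerstructureprop1}), that the restriction $H^1(G_S,\CM)[x]\to\CP_S[x]$ is surjective. The snake lemma applied to multiplication by $x$ on $0\to\CS\to H^1(G_S,\CM)\to\CP_S\to 0$ then gives $\CS/x\CS\hookrightarrow H^1(G_S,\CM)/xH^1(G_S,\CM)$, and the latter vanishes for all but finitely many $m$ (this is the input from \cite[Lem.~3.3.18]{SU-MCGL} or \cite{Gr-nopseudonull}). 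Hence $\CS/x\CS=0$, so by Nakayama every finite quotient of $\CS$---equivalently every finite submodule of $\CX$---is zero. Part (ii) is then proved by the \emph{same} argument with $\CP_S$ replaced by $\CP_{S,\Sigma}$, rather than by reducing (ii) to (i). This is both shorter and uniform in $\alpha_p$; no separate analysis of the split multiplicative place is needed.

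Your reduction of (ii) to (i) via the explicit local quotient $Q$ is correct and pleasant, and would combine well with the paper's proof of (i).
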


\begin{proof}
To prove part (i), let $S\supset\{\ell\mid Np\}$ be any finite set of primes and let
$$
\CP_S= H^1(\Qp,\CM^-)\times \prod_{\ell\in S,\ell\neq p}H^1(\Ql,\CM).
$$
For $x = \gamma-u^{m}\in\Lambda_\O$, $\CP_S[x]$ is a quotient of 
$$
P_{S,x} = H^1(\Qp,\CM[x])/L_p(\CM[x]) \times \prod_{\ell\in S,\ell\neq p}H^1(\Ql,\CM[x]),
$$
where $L_p(\CM[x]) = \im\{H^1(\Qp,\CM^+[x])\rightarrow H^1(\Qp,\CM[x])\}$.
Therefore the cokernel of the restriction map $H^1(G_S,\CM[x])=H^1(G_S,\CM)[x]\rightarrow \CP_S[x]$
is a quotient of the cokernel of the restriction map $H^1(G_S,\CM[x])\rightarrow P_{S,x}$. By
global Tate duality, the Pontryagin dual of the latter is a subquotient of $\Sel^S(N_x)$, where
$N_x$ and $\Sel^S(N_x)$ are as in \eqref{Selinjecteq1}. But, as
shown in the proof of Proposition \ref{Selmerstructureprop1}, 
$m$ can be chosen so that $\Sel^S(N_x)=0$ and hence so that
$H^1(G_S,\CM)[x]\twoheadrightarrow\CP_S[x]$.  
It then follows from an application of the 
snake lemma to multiplication by $x$ of the short exact sequence 
$$
0\rightarrow \CS \rightarrow H^1(G_S,\CM) \rightarrow \CP_S \rightarrow 0
$$
that,
for such a choice of $m$,
\begin{equation}\label{Selinjecteq4}
\CS/x\CS \hookrightarrow H^1(G_S,\CM)/xH^1(G_S,\CM).
\end{equation}
However, as shown in both \cite[Lem.~3.3.18]{SU-MCGL} and \cite[Prop.~2.6.1]{Gr-nopseudonull}, the right-hand side of \eqref{Selinjecteq4} 
is trivial for all but finitely many $m$,
so the $m$ can also be chosen so that  $\CS/x\CS = 0$. 
Let $X\subseteq \CX$ be a sub-$\Lambda_\O$-module of finite order, and let $X^*$ be
its Pontryagin dual.  Then $X^*/xX^*$ is a quotient
of $\CS/x\CS$ and so is $0$. By Nakayama's lemma $X^*=0$, hence $X=0$.
This proves (i). 

To prove part (ii), let $S\supset \Sigma\cup\{\ell\mid Np\}$ and let 
$$
\CP_{S,\Sigma} = H^1(I_p,\CM^-)^{G_\Qp}\times \prod_{\ell\in S\backslash \Sigma}H^1(\Ql,\CM)
$$
and
$$
\CP_{S,\Sigma,x} = H^1(\Qp,\CM[x])/L_p(\CM[x]) \times \prod_{\ell\in S\backslash\Sigma}H^1(\Ql,\CM[x]).
$$
We may then argue as in the proof of part (i) but with 
$\CP_S$ replaced by $\CP_{S,\Sigma}$.
Then $\CS$ is replaced by $\Sel^\Sigma_{\Q_\infty,L}(f)$. Furthermore, as $\CP_{S,\Sigma,x}$
is a quotient of $\CP_{S,x}$, the surjectivity of the restriction map $H^1(G_S,\CM[x])\rightarrow\CP_{S,\Sigma,x}$,
and hence of the restriction map $H^1(G_S,\CM[x])\rightarrow \CP_{S,\Sigma}[x]$,
follows for a suitable $x=\gamma-u^m\in\Lambda_\O$ from the surjectivity of the restriction map onto $\CP_{S,x}$ 
established in the proof of part (i).
\end{proof}

Let $F_L^\Sigma(f)$ be the $\Lambda_\O$-Fitting ideal of $X^\Sigma_{\Q_\infty,L}(f)$.
The following is a straight-forward consequence of the preceding propositions.

\begin{lem}\label{Ch-Fitt-coro}
Suppose  $k\equiv 2 \pmod{p-1}$ and $\bar\rho_f$ is irreducible. 
\begin{itemize}
\item[\rm (i)] $Ch_L^\Sigma(f) = Ch_L(f)\cdot(\prod_{\ell\in \Sigma, \ell\neq p} 
P_\ell(\Psi^{-1}\eps^{-1}(\frob_\ell))$.
\item[\rm (ii)] $F^\Sigma_L(f) = Ch_L^\Sigma(f)$.
\end{itemize}
\end{lem}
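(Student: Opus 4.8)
The plan is to deduce both parts from the structural results of Propositions \ref{Selmerstructureprop1} and \ref{Selmerstructureprop2}, treating the torsion and non-torsion cases separately. First I would dispose of the case in which $X_{\Q_\infty,L}(f)$ is \emph{not} $\Lambda_\O$-torsion: then by the remarks in Section \ref{SelmerSection} (the exact sequence relating $\Sel^{\Sigma_1}$ and $\Sel^{\Sigma_2}$, together with the fact that each $H^1(I_\ell,\CM)^{G_\Ql}$ is $\Lambda_\O$-cotorsion for $\ell\neq p$) every $X^\Sigma_{\Q_\infty,L}(f)$ is also non-torsion, so $Ch^\Sigma_L(f) = (0)$; and a finitely generated $\Lambda_\O$-module has zero Fitting ideal precisely when it is not torsion (equivalently, has positive rank), so $F^\Sigma_L(f) = (0)$ as well, and both identities hold trivially (with the convention $(0)\cdot\mathfrak a = (0)$). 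So I may assume $X_{\Q_\infty,L}(f)$ is $\Lambda_\O$-torsion, which is exactly the hypothesis needed to invoke Propositions \ref{Selmerstructureprop1} and \ref{Selmerstructureprop2}.

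For part (i), the surjectivity of the restriction map \eqref{restrict-eq2} from Proposition \ref{Selmerstructureprop1} gives, for $S\supset\Sigma\cup\{\ell\mid Np\}$, a short exact sequence
$$
0 \rightarrow \Sel^\Sigma_{\Q_\infty,L}(f) \rightarrow \Sel^S_{\Q_\infty,L}(f) \rightarrow \prod_{\ell\in S\backslash\Sigma} H^1(I_\ell,\CM)^{G_\Ql} \rightarrow 0,
$$
and symmetrically comparing $\Sigma$ with $\{p\}$ (using that surjectivity of \eqref{restrict-eq1}, hence \eqref{restrict-eq2}, holds for \emph{every} such $S$) yields
$$
0 \rightarrow \Sel_{\Q_\infty,L}(f) \rightarrow \Sel^\Sigma_{\Q_\infty,L}(f) \rightarrow \prod_{\ell\in\Sigma,\ell\neq p} H^1(I_\ell,\CM)^{G_\Ql} \rightarrow 0.
$$
Dualizing, $X^\Sigma_{\Q_\infty,L}(f)$ is an extension of $X_{\Q_\infty,L}(f)$ by the product of the Pontryagin duals of the local terms, and since characteristic ideals are multiplicative in short exact sequences of torsion $\Lambda_\O$-modules, $Ch^\Sigma_L(f) = Ch_L(f)\cdot\prod_{\ell\in\Sigma,\ell\neq p}\bigl(P_\ell(\Psi^{-1}\eps^{-1}(\frob_\ell))\bigr)$, using the computation of the local characteristic ideals recalled in Section \ref{SelmerSection}. (This also upgrades the inclusion displayed after that computation to an equality.)

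For part (ii), the key input is Proposition \ref{Selmerstructureprop2}(ii): $X^\Sigma_{\Q_\infty,L}(f)$ has no nonzero finite-order $\Lambda_\O$-submodule. For a finitely generated torsion module $X$ over the two-dimensional regular local ring $\Lambda_\O$, this ``no pseudo-null submodule'' property implies that $X$ has no nonzero pseudo-null submodule, hence embeds into its reflexive hull, equivalently into $\bigoplus_i \Lambda_\O/(g_i)$ where $(g_i)$ are the elementary divisors, with pseudo-null cokernel; the standard consequence (which I would cite, as it is a well-known lemma on Fitting ideals over $\Lambda_\O$; see e.g. the arguments in \cite{Gr-nopseudonull} or \cite[\S3.3]{SU-MCGL}) is that then the Fitting ideal coincides with the characteristic ideal, $F^\Sigma_L(f) = Ch^\Sigma_L(f)$. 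I expect the only real subtlety here to be citing or recording this last commutative-algebra fact in the precise form needed — that over $\Lambda_\O$ a torsion module with no nonzero finite-order (equivalently, no nonzero pseudo-null) submodule has Fitting ideal equal to characteristic ideal — since everything else is a formal manipulation of exact sequences and multiplicativity of characteristic ideals; this is exactly the point where Greenberg's ``no pseudo-null submodules'' philosophy does the work.
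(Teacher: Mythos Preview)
Your treatment of the non-torsion case and of part (i) matches the paper's: both reduce (i) to the surjectivity in Proposition \ref{Selmerstructureprop1} and multiplicativity of characteristic ideals in short exact sequences.

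For part (ii) your route is correct but genuinely different from the paper's. You invoke the general commutative-algebra fact that a finitely generated torsion $\Lambda_\O$-module $X$ with no nonzero pseudo-null submodule has $\mathrm{Fitt}_0(X)=\mathrm{char}(X)$; this is true and follows cleanly from Auslander--Buchsbaum (no finite submodule $\Rightarrow \mathrm{depth}\,X\geq 1 \Rightarrow \mathrm{pd}\,X\leq 1$, so $X$ has a square presentation $0\to\Lambda_\O^n\xrightarrow{A}\Lambda_\O^n\to X\to 0$ and both ideals equal $(\det A)$). The paper does not cite this; instead it argues directly: it notes $F^\Sigma_L(f)\subset Ch^\Sigma_L(f)$, picks $\lambda=\gamma-u^m$ with $X^\Sigma_{\Q_\infty,L}(f)/\lambda$ finite, and compares $\#\Lambda_\O/(\lambda,F^\Sigma_L(f))$ (computed via Fitting-ideal base change) with $\#\Lambda_\O/(\lambda,Ch^\Sigma_L(f))$ (computed via the ``no finite submodule'' input and the footnote-5 length argument), concluding by Nakayama that the kernel $\gra$ of $\Lambda_\O/F^\Sigma_L(f)\twoheadrightarrow\Lambda_\O/Ch^\Sigma_L(f)$ vanishes. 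Your approach is shorter and more conceptual but depends on locating or proving the projective-dimension lemma; the paper's is self-contained and stays within the Iwasawa-theoretic toolkit already set up (the same $\lambda=\gamma-u^m$ specializations used in Propositions \ref{Selmerstructureprop1} and \ref{Selmerstructureprop2}).
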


\begin{proof}
If $X^\Sigma_{\Q_\infty,L}(f)$ is not a torsion $\Lambda_\O$-module (equivalently, $X_{\Q_\infty,L}(f)$ is not a torsion
$\Lambda_\O$-module), then $Ch_L(f)$, $Ch^\Sigma_L(f)$, and $F_L^\Sigma(f)$ are all zero, so there is 
nothing to prove.  We suppose then that $X^\Sigma_{\Q_\infty,L}(f)$ is a torsion $\Lambda_\O$-module.

Part (i) is immediate from Proposition \ref{Selmerstructureprop1} and the definition of characteristic
ideals. For part (ii), we first note that $F_L^\Sigma(f) \subset Ch_L^\Sigma(f)$. Let $\gra$ be the kernel 
of the quotient $\Lambda_\O/F^\Sigma_L(f)\twoheadrightarrow
\Lambda_\O/Ch^\Sigma_L(f)$. Since $X^\Sigma_{\Q_\infty,L}(f)$ is a torsion $\Lambda_\O$-module
and $Ch_L^\Sigma(f)$ is a principal ideal,
there exists $\lambda = \gamma-u^m\in \Lambda_\O$ such that 
$\lambda$ is not a zero-divisor in $\Lambda_\O/Ch_L^\Sigma(f)$ and 
$X^\Sigma_{\Q_\infty,L}(f)/\lambda X^\Sigma_{\Q_\infty,L}(f)$ is a torsion $\Lambda_\O/\lambda\Lambda_\O = \O$-module. 
The size of this module is then equal to the size of both $\Lambda_\O/(\lambda,F^\Sigma_L(f))$ and 
$\Lambda_\O/(\lambda,Ch_L^\Sigma(f))$ (which are necessarily finite), the first by basic
properties of Fitting ideals and the second by Proposition \ref{Selmerstructureprop2}(ii) and a 
standard argument\footnote{The argument: A finitely-generated torsion $\Lambda_\O$-algebra
$X$ admits a $\Lambda_\O$-homomorphism $X\rightarrow Y=\prod_{i=1}^r \Lambda_\O/(f_i)$ with finite-order kernel 
$\gra$ and cokernel $\grb$
and such that the $\Lambda_\O$-characteristic ideal of $X$ is $(f_1\cdots f_r)$. Let $f=f_1\cdots f_r$. 
If $X$ has no finite-order $\Lambda_\O$-submodules, then the map to $Y$ is an injection. 
Multiplying the short exact sequence 
$0 \rightarrow X\rightarrow Y \rightarrow \grb\rightarrow 0$ by $\lambda=\gamma-u^m$ and applying the snake lemma is easily seen to give
$$
\#X/\lambda X = \#Y/\lambda Y = \prod\#\Lambda_\O/(\lambda,f_i) = \prod\#\O/(f_i(u^m-1)) = \#\O/(f(u^m-1)) = \#\Lambda_\O/(\lambda,f),
$$
where we have written $f_i(u^m-1)$ and $f(u^m-1)$ for the respective images of $f_i$ and $f$ under the 
continuous $\O$-algebra homomorphism $\Lambda_\O\rightarrow\O$ sending
$\gamma$ to $u^m$.}
from Iwasawa theory. If follows that 
the natural projection $\Lambda_\O/(\lambda,F_L^\Sigma(f))\twoheadrightarrow \Lambda_\O/(\lambda,Ch_L^\Sigma(f))$
is an isomorphism.  Applying the snake lemma to the diagram obtained by multiplying the short exact sequence 
$$
0\rightarrow \gra \rightarrow \Lambda_\O/F_L^\Sigma(f) \rightarrow \Lambda_\O/Ch_L^\Sigma(f) \rightarrow 0
$$
by $\lambda$ then yields an exact sequence
$$
0\rightarrow \gra/\lambda\gra \rightarrow \Lambda_\O/(\lambda,F_L^\Sigma(f)) \isoarrow 
\Lambda_\O/(\lambda,Ch_L^\Sigma(f))\rightarrow 0.
$$
Therefore $\gra/\lambda\gra$, and hence $\gra$, is $0$.
\end{proof}

\subsection{$p$-adic $L$-functions}\label{padicLs}
Let $f$, $L$, $\O$, and $\Lambda_\O$ be as in the preceding section, with the assumption that
$k\geq 2$ and $f$ is ordinary with respect to $L$. Amice and V\'elu \cite{AV} and Vishik \cite{Vishik} 
(see also \cite{MTT})  constructed a $p$-adic $L$-function for $f$. This is a power series 
$\CL_f\in \Lambda_\O$ with the property that if $\phi:\Lambda_\O\rightarrow\bQp$ is 
a continuous $\O$-homomorphism such that $\phi(\gamma) = \zeta u^m$ with $\zeta$ a 
primitive $p^{t_\phi-1}$th root of unity and $0\leq m \leq k-2$ an integer, then\footnote{The
power of $-2\pi i$ in the denominator of this formula is incorrectly given as $(-2\pi i)^m$ 
in some of the formulas in \cite{SU-MCGL}, namely in the introduction, in \S3.4.4, and in Theorem 3.26 of {\it loc.~cit.} 
In these cases the correct factor is $(-2\pi i)^{m+1}$. This error originates in the difference between
$\Omega_{f}^\pm$ as defined in \cite[\S3.3.3]{SU-MCGL} and the $\Omega^\pm$ in \cite[I.9]{MTT}: 
$\Omega^\pm = -2\pi i \Omega_f^\pm$. The exponents of $-2\pi i$ are correct in the formulas in \cite{SU-MCGL}
for the $L$-function of $f$ twisted by a Hecke character of the imaginary quadratic field $\mathcal{K}$.}
\begin{equation}\label{padicLeq1}
\begin{split}
\CL_f(\phi):=\phi(\CL_f) = e(\phi)\frac{p^{t_\phi'(m+1)}m!L(f,\chi_\phi^{-1}\omega^{-m},m+1)}
{ (-2\pi i)^{m+1} G(\chi_\phi^{-1}\omega^{-m}) \Omega_f^{\mathrm{sgn}((-1)^m)}},  \\
e(\phi) = \alpha_p^{-t_\phi}\left(1 - \frac{\omega^{-m}\chi_\phi^{-1}p^{k-2-m}}{\alpha_p}\right)
\left( 1 - \frac{\omega^m\chi_\phi(p) p^m}{\alpha_p}\right),
\end{split}
\end{equation}
where $\alpha_p$ is the unique (unit) root  in $\O^\times$ of 
$x^2-a_p(f)x+p^{k-1}$ if $p\nmid N$ and $\alpha_p = a_p(f)$ if $p\mid N$, 
$t_\phi' = 0$ if $t_\phi=1$ and $p-1\mid m$ and otherwise $t_\phi' = t_\phi$,
$\chi_\phi$ is the primitive Dirichlet character of $p$-power order and conductor (which can
be viewed as a finite-order character of $\Z_p^\times$) such that
$\chi_\phi(u) = \zeta^{-1}$, $G(\chi_\phi^{-1}\omega^{-m})$ is the usual Gauss sum (and so 
equals $1$ if $t_\phi'=0$), and $\Omega_f^\pm$ are the canonical periods of $f$ (these are 
well-defined up to a unit in $\O$; see \cite[\S3.3.3]{SU-MCGL}).

Let $\Sigma$ be a finite set of primes. We define an incomplete $p$-adic $L$-function
$\CL^\Sigma_f \in \Lambda_\O$ by
\begin{equation}\label{padicLeq2}
\CL^\Sigma_f = \CL_f \cdot \prod_{\ell\in \Sigma,\ell\neq p} 
P_\ell(\Psi^{-1}\eps^{-1}(\frob_\ell)).
\end{equation}
Note that
\begin{equation*}
  P_\ell(\Psi^{-1}\eps^{-1}(\frob_\ell)) =
\begin{cases} 1-a_\ell(f)\ell^{-1}\Psi^{-1}(\frob_\ell) + \ell^{k-3}\Psi^{-2}(\frob_\ell)
& \ell\nmid N \\
1-a_\ell(f)\ell^{-1}\Psi^{-1}(\frob_\ell)  & \ell\mid N.
\end{cases}
\end{equation*}
In particular, the value of $\CL_f^\Sigma$ under a continuous $\O$-algebra homomorphism
$\phi:\Lambda_\O\rightarrow \bQp$ such that $\phi(\gamma) = \zeta u^m$, $0\leq m\leq k-2$,
can be expressed in terms of a special value of an incomplete $L$-function:
$$
\CL_f^\Sigma(\phi) = e(\phi)\frac{p^{t_\phi'(m+1)}m!L^{\Sigma\backslash\{p\}}(f,\chi_\phi^{-1}\omega^{-m},m+1)}
{ (-2\pi i)^{m+1} G(\chi_\phi^{-1}\omega^{-m}) \Omega_f^{\mathrm{sgn}((-1)^m)}}.
$$

\begin{rmk} Let $\Z(f)$ be the ring of integers of $\Q(f)$ and let $\grp$ be the prime of $\Z(f)$
determined by the chosen embedding $\Q(f)\hookrightarrow \bQp$. Then $\Omega_f^\pm$ is well-defined up to a unit in the localization $\Z(f)_{(\grp)}$ of $\Z(f)$, and the value of the
$p$-adic $L$-function under a homomorphism $\phi$ as above lies in a finite extension of $\Z(f)_{(\grp)}$. It is in this way that 
period-normalized values of the $L$-function $L(f,s)$ and its twists,
which {\it a priori} are complex values, can be viewed as being in $\bQp$ without fixing an 
isomorphism $\bQp\isom\C$.
\end{rmk}

Suppose $f$ has split multiplicative reduction at $p$. Then it follows easily from \eqref{padicLeq1} that if
 $\phi_0:\Lambda_\O\rightarrow \bQp$ is the $\O$-algebra homomorphism such that $\phi_0(\gamma) = 1$,
 then $\CL_f(\phi_0) = 0$. In particular, $\CL_f = (\gamma-1)\cdot \CL_f'$ for some $\CL_f'\in \Lambda_\O$.
Greenberg and Stevens \cite[Thm.~7.1]{GrSt} proved that $\CL_f'(\phi_0) = \phi_0(\CL_f')$ is
related to the $\grL$-invariant of $V_f$ by the formula
\begin{equation}\label{GrSteq}
\CL'_f(\phi_0) = (\log_p u)^{-1}\grL(V_f)\frac{L(f,1)}{-2\pi i \Omega_f^+}.
\end{equation}
 More precisely, 
if we identify $\Lambda_\O$ with the power-series ring $\O[\![T]\!]$ by sending $\gamma$ to $1+T$,
and if we let $L_p(f,s) = \CL_f(u^{s-1}-1)$, $s\in\Zp$, then Greenberg and Stevens proved that
$$
\frac{d}{ds}L_p(f,s) \mid_{s=1}  = \grL(V_f)\frac{L(f,1)}{-2\pi i \Omega_f^+}.
$$
This is easily seen to be equivalent to \eqref{GrSteq}. This formula was conjectured by Mazur, Tate, and Teitelbaum \cite[\S13]{MTT}.

\subsection{The Iwasawa--Greenberg Main Conjecture}\label{MCsection}

Let $f$, $L$, $\O$, $\Lambda_\O$, $\CL_f$, etc., be as in the preceding sections.
Along the lines of Iwasawa's original Main Conjecture for totally real number fields, Mazur and
Swinnerton-Dyer (for modular elliptic curves) and Greenberg (more generally) made the following
conjecture.
\begin{conj} If $\Sigma$ is any finite
set of primes containing $p$, then $X^\Sigma_{\Q_\infty,L}(f)$ is a torsion $\Lambda_\O$-module and $Ch_L^\Sigma(f) = (\CL_f^\Sigma)$ in $\Lambda_\O\otimes_\Zp\Qp$ and even in $\Lambda_\O$ if $\bar\rho_f$ is irreducible.
\end{conj}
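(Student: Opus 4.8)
The plan is to establish the conjecture under the hypotheses of Theorem \ref{thmA}: (i) $k\equiv 2\pmod{p-1}$, (ii) $\bar\rho_f$ irreducible, and (iii) the existence of a prime $q\neq p$ with $q\mid\mid N$ at which $\bar\rho_f$ is ramified. When $p\nmid N$ this is \cite{SU-MCGL} together with Kato's divisibility \cite{Kato} (the conditions on the image of $\rho_f$ in \cite{SU-MCGL} being relaxable to (ii) and (iii)), so the task reduces to the case $p\mid N$ --- where necessarily $p\mid\mid N$, $k=2$ and $a_p(f)=\pm1$ by Lemma \ref{ordlemma} --- which we deduce from the good-reduction case. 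First I would make two standard reductions. By \eqref{Ch-basechange-eq}, the analogous invariance of $\CL_f^\Sigma$ under extension of scalars, and faithfully flat descent, it suffices to prove $Ch_L^\Sigma(f)=(\CL_f^\Sigma)$ after enlarging $L$ to any finite extension. And since each $P_\ell(\Psi^{-1}\eps^{-1}(\frob_\ell))$ is a nonzerodivisor in $\Lambda_\O$, Lemma \ref{Ch-Fitt-coro}(i) and \eqref{padicLeq2} show that the equality for $\Sigma=\{p\}$ is equivalent to the equality for any one finite $\Sigma\supseteq\{\ell\mid N\}$; so it is enough to treat a single convenient such $\Sigma$.

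Write $N=pM$, so $p\nmid M$. The next step is to feed $f$ into a Hida family. Possibly after enlarging $L$ (harmless by the first reduction), $f$ lies on a branch whose other classical specializations are ordinary $p$-stabilizations of newforms of tame level $M$ with good reduction at $p$, and for every integer $m>0$ one may choose a newform $f_m\in S_{k_m}(\Gamma_0(M))$ with $\Q(f_m)\subset L$, $k_m\equiv k\pmod{(p-1)p^m}$, $f_m$ ordinary at $p$, whose ordinary $p$-stabilization $f_m^*$ satisfies $f_m^*\equiv f\pmod{p^m}$ on $q$-expansions. The two-variable $p$-adic $L$-function of the branch specializes to $\CL_{f_m}$ and, at the point of $f$, to $\CL_f$; since these two specialization points are congruent modulo $p^m$ (congruent weights, congruent Hecke eigenvalues), one gets $\CL_{f_m}^\Sigma\equiv\CL_f^\Sigma\pmod{p^m\Lambda_\O}$. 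The congruence $f_m^*\equiv f\pmod p$ also gives $\bar\rho_{f_m}\isom\bar\rho_f$ (equal semisimplifications, and $\bar\rho_f$ irreducible), so $f_m$ inherits (i)--(iii); as $p\nmid M$, the known good-reduction case applies to $f_m$, yielding that $X^\Sigma_{\Q_\infty,L}(f_m)$ is $\Lambda_\O$-torsion and $Ch_L^\Sigma(f_m)=(\CL_{f_m}^\Sigma)$, and Lemma \ref{Ch-Fitt-coro}(ii) then upgrades this to $F_L^\Sigma(f_m)=Ch_L^\Sigma(f_m)=(\CL_{f_m}^\Sigma)$.

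Now I would run the Fitting-ideal comparison. The congruence $f_m^*\equiv f\pmod{p^m}$ identifies the pairs $(T_f/p^mT_f,\ T_f^+/p^mT_f^+)$ and $(T_{f_m}/p^mT_{f_m},\ T_{f_m}^+/p^mT_{f_m}^+)$ --- the residual-mod-$p^m$ representations agree since traces of Frobenius agree mod $p^m$ (using irreducibility of $\bar\rho_f$), and the ordinary lines match since $a_p(f_m^*)\equiv a_p(f)\pmod{p^m}$ and the weights are congruent mod $(p-1)p^m$, so the $G_\Qp$-actions on both the plus- and minus-parts agree mod $p^m$. Hence Lemma \ref{Selmer-modpm} gives $\Sel^\Sigma_{\Q_\infty,L}(f)[p^m]\isom\Sel^\Sigma_{\Q_\infty,L}(f_m)[p^m]$, so $X^\Sigma_{\Q_\infty,L}(f)/p^m\isom X^\Sigma_{\Q_\infty,L}(f_m)/p^m$ as $\Lambda_\O/p^m$-modules, and comparing Fitting ideals yields, for every $m>0$,
$$
(F_L^\Sigma(f),p^m)=(F_L^\Sigma(f_m),p^m)=(Ch_L^\Sigma(f_m),p^m)=(\CL_{f_m}^\Sigma,p^m)=(\CL_f^\Sigma,p^m).
$$
Since $\CL_f^\Sigma\neq0$ (a known non-vanishing) and $\bigcap_m p^m\Lambda_\O=0$, the identity $(F_L^\Sigma(f),p^m)=(\CL_f^\Sigma,p^m)$ for all $m$ forces $F_L^\Sigma(f)\neq0$ (otherwise the left side would be $(p^m)$ for all $m$, making $\CL_f^\Sigma$ divisible by every power of $p$); hence $X^\Sigma_{\Q_\infty,L}(f)$ is $\Lambda_\O$-torsion. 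With torsionness in hand, Lemma \ref{Ch-Fitt-coro}(ii) --- whose proof invokes Propositions \ref{Selmerstructureprop1}--\ref{Selmerstructureprop2} --- applies to $f$ itself, giving $F_L^\Sigma(f)=Ch_L^\Sigma(f)$, so $(Ch_L^\Sigma(f),p^m)=(\CL_f^\Sigma,p^m)$ for all $m$. Finally, writing the principal ideal $Ch_L^\Sigma(f)=(g)$, one has $g\in\bigcap_m\big((\CL_f^\Sigma)+p^m\Lambda_\O\big)=(\CL_f^\Sigma)$ by the Krull intersection theorem applied to the Noetherian local ring $\Lambda_\O/(\CL_f^\Sigma)$, and symmetrically $\CL_f^\Sigma\in(g)$; thus $Ch_L^\Sigma(f)=(\CL_f^\Sigma)$, and by the two reductions the conjecture follows, in $\Lambda_\O$, the integral statement being available since (ii) is assumed.

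The step I expect to be the main obstacle --- and the only genuine input beyond \cite{SU-MCGL} and the material already assembled above --- is the Hida-theoretic bookkeeping in the second paragraph: producing the approximating newforms $f_m$ of tame level $M$ with the stated weight and $q$-expansion congruences and with hypotheses (i)--(iii) preserved, and extracting the congruence $\CL_{f_m}^\Sigma\equiv\CL_f^\Sigma\pmod{p^m}$ from the two-variable $p$-adic $L$-function of the family. Once these congruences are in place, the rest is formal: the comparison of Fitting ideals and characteristic ideals modulo $p^m$, the structural results on the Selmer modules (no nonzero finite-order $\Lambda_\O$-submodules, surjectivity of the pertinent restriction maps) recorded in Lemma \ref{Ch-Fitt-coro} and Propositions \ref{Selmerstructureprop1}--\ref{Selmerstructureprop2}, and the elementary principal-ideal endgame.
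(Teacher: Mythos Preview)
Your proposal is correct and follows essentially the same route as the paper: reduce to a large $\Sigma$ and a suitable extension of $L$, approximate $f$ by good-reduction newforms $f_m$ in its Hida family with the congruences \eqref{latticeequality}, \eqref{latticeequality2}, \eqref{padicLequation}, invoke the good-reduction main conjecture for each $f_m$, compare Fitting ideals modulo $p^m$ via Lemma~\ref{Selmer-modpm}, and then pass to the limit. The only cosmetic difference is your endgame: you deduce $Ch_L^\Sigma(f)=(\CL_f^\Sigma)$ from $(Ch_L^\Sigma(f),p^m)=(\CL_f^\Sigma,p^m)$ for all $m$ via the Krull intersection theorem in $\Lambda_\O/(\CL_f^\Sigma)$, whereas the paper writes down an explicit Cauchy sequence of multipliers; these are equivalent.
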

\noindent It follows easily from Lemma \ref{Ch-Fitt-coro}(i) and \eqref{padicLeq2} that if this  conjecture holds for one set $\Sigma$ 
then it holds for all sets $\Sigma$.  Also, the conjecture with $L$ replaced by any finite extension implies the conjecture for $L$, as can be seen by the observations in 
Section \ref{SelmerSection} on the relation \eqref{Ch-basechange-eq} between $Ch^\Sigma_L(f)$ and $Ch^\Sigma_{L_1}(f)$
for a finite extension $L_1\supset L$.

In \cite{SU-MCGL} the following theorem was proved, in combination with results of Kato \cite{Kato}, which established this conjecture for a large class of modular forms.

\begin{thm}\label{MCthm1} Suppose
\begin{itemize}
\item[\rm (i)] $k\equiv 2\pmod{p-1}$;
\item[\rm (ii)] $\bar\rho_f$ is irreducible;
\item[\rm (iii)] there exists a prime $q\neq p$ such that $q\mid\mid N$ and $\bar\rho_f$ is ramified at $q$;
\item[\rm (iv)] $p\nmid N$ (this is automatic if $k\neq 2$).
\end{itemize}
Then for any finite set of primes $\Sigma$, $X^\Sigma_{\Q_\infty,L}(f)$ is a torsion $\Lambda_\O$-module and $Ch^\Sigma_L(f) = (\CL_f^\Sigma)$ in $\Lambda_\O$.
\end{thm}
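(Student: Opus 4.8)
The plan is to obtain the theorem by combining the two known halves of the main conjecture --- Kato's divisibility coming from his Euler system, and the Skinner--Urban divisibility coming from Eisenstein congruences --- and then upgrading to an equality of ideals. First I would make two harmless reductions. By Lemma~\ref{Ch-Fitt-coro}(i) and the definition \eqref{padicLeq2} of the incomplete $p$-adic $L$-function, for a general finite set $\Sigma\ni p$ the asserted equality $Ch^\Sigma_L(f)=(\CL^\Sigma_f)$ follows from the single case $\Sigma=\{p\}$, i.e.\ from $Ch_L(f)=(\CL_f)$, once $X_{\Q_\infty,L}(f)$ is known to be $\Lambda_\O$-torsion (equivalently, as noted in Section~\ref{SelmerSection}, once $X^\Sigma_{\Q_\infty,L}(f)$ is torsion). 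By the base-change relation \eqref{Ch-basechange-eq} and the analogous compatibility of $\CL_f$ under extension of $L$, it also suffices to prove $Ch_L(f)=(\CL_f)$ after replacing $L$ by any convenient finite extension. Finally I would record that $\CL_f\neq 0$ in $\Lambda_\O$: by the interpolation formula \eqref{padicLeq1} this follows from the non-vanishing of $L(f,\chi,1)$ for all but finitely many Dirichlet characters $\chi$ of $p$-power conductor, a theorem of Rohrlich.

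With these in hand I would import the two divisibilities. Kato's work \cite[Thm.~17.4]{Kato}, via the Beilinson--Kato Euler system and the Euler system machinery, gives --- using $\CL_f\neq 0$, $p\nmid N$ (hypothesis (iv)), and the irreducibility of $\bar\rho_f$ --- that $X_{\Q_\infty,L}(f)$ is a torsion $\Lambda_\O$-module and that $(\CL_f)\subseteq Ch_L(f)$ in $\Lambda_\O$; this inclusion is already integral, since the Euler system argument produces an integral upper bound for the Selmer group. For the reverse inclusion I would invoke the main theorem of \cite{SU-MCGL}: the Eisenstein-congruence and lattice construction there, specialised to the cyclotomic line for $f$ (with the auxiliary imaginary quadratic field chosen appropriately), gives $Ch_L(f)\subseteq(\CL_f)$ under hypotheses (i)--(iv). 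A priori \cite{SU-MCGL} yields this inclusion only in $\Lambda_\O\otimes_\Zp\Qp$ unless $\rho_f$ admits an $\O$-basis with respect to which the image contains $\SL_2(\Zp)$; the point to be established --- and this is precisely what Section~\ref{MCsection} does --- is that hypotheses (ii) and (iii) already suffice for the integral form of that argument: they make the relevant residual Hecke and Galois data non-degenerate enough and, via the mod~$p$ non-vanishing of central $L$-values proved in \cite{Vatsal} and corrected and supplemented in \cite{Chida-Hsieh}, pin the Eisenstein ideal to $(\CL_f)$ exactly rather than only up to a power of $p$. The two inclusions combine to $Ch_L(f)=(\CL_f)$; since $Ch_L(f)$ is a principal ideal of $\Lambda_\O$ this is a genuine equality of ideals, and the statement for general $\Sigma$ then follows from the reductions of the previous paragraph.

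The genuine difficulty lies entirely inside the two cited inputs: Kato's explicit reciprocity law relating the zeta element to $\CL_f$, together with the Euler system bound on the Selmer group; and the Skinner--Urban Eisenstein-congruence construction together with the identification of the Eisenstein ideal with the $p$-adic $L$-function. Within the scope of the present deduction the only genuinely new point requiring care is the verification that the \emph{integral} Skinner--Urban divisibility survives under hypotheses (ii) and (iii) in place of the $\SL_2(\Zp)$-image condition, and that the mod~$p$ non-vanishing needed there is supplied by \cite{Chida-Hsieh} in addition to \cite{Vatsal}; everything else is routine manipulation of characteristic and Fitting ideals along the lines of Section~\ref{SelmerSection}.
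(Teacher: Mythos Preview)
Your overall architecture---combine Kato's divisibility $(\CL_f)\subseteq Ch_L(f)$ with the Skinner--Urban divisibility $Ch_L(f)\subseteq(\CL_f)$, after the standard reductions on $\Sigma$ and $L$---is exactly the paper's. But you have inverted the location of the $\SL_2(\Zp)$-image hypothesis, and this matters.

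You assert that Kato's inclusion is ``already integral, since the Euler system argument produces an integral upper bound,'' using only the irreducibility of $\bar\rho_f$. That is not what \cite[Thm.~17.4]{Kato} says: the integral statement there is proved under the hypothesis that the image of $\rho_f$ contains $\SL_2(\Zp)$, and the paper's whole point in Section~\ref{MCsection} is to reread Kato's proof and show that what is really needed is (a) irreducibility of $\bar\rho_f$ (to replace Kato's Lemma~14.7 by Lemma~\ref{lattice-lem} here) \emph{and} (b) the existence of $g\in\Gal(\bQ/\Q[\mu_{p^\infty}])$ with $T_f/(\rho_f(g)-1)T_f$ free of rank one over $\O$ (to feed into Kato's Thm.~13.4(3), i.e.\ the Euler-system machinery). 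Hypothesis~(iii) is used precisely to produce such a $g$: a generator $\tau$ of tame inertia at the level-$q$ prime acts unipotently and nontrivially mod~$\grm$, so $T_f/(\rho_f(\tau)-1)T_f$ is free of rank one. Irreducibility alone does not give (b), so your argument as written has a gap on the Kato side.

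Conversely, you place the $\SL_2$-removal work on the Skinner--Urban side and tie it to the mod~$p$ non-vanishing of \cite{Vatsal} and \cite{Chida-Hsieh}. These are two unrelated issues. The Vatsal/Chida--Hsieh remark in Section~\ref{MCsection} is a correction of a reference inside the proof of \cite[Prop.~12.3.6]{SU-MCGL}: the weight-two specialisation of the auxiliary Hida family used there may have multiplicative reduction at $p$, a case not covered by \cite{Vatsal} but handled by \cite{Chida-Hsieh}. It has nothing to do with upgrading the Skinner--Urban divisibility from $\Lambda_\O\otimes_{\Zp}\Qp$ to $\Lambda_\O$, nor with ``pinning the Eisenstein ideal up to a power of $p$.'' The Eisenstein-congruence divisibility from \cite{SU-MCGL} is already integral; it is the combination with Kato (for $f$ and for $f\otimes\chi_K$) that yields the refined equality, and the $\SL_2$ issue lives entirely in the Kato input.
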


In \cite{SU-MCGL} an additional hypothesis is required to conclude equality in $\Lambda_\O$ and not just in $\Lambda_\O\otimes_\Zp\Qp$:
\begin{equation*}\tag{*}
 \text{there exists an $\O$-basis of $T_f$ such that the image of $\rho_f$ contains $\SL_2(\Zp)$.}
\end{equation*}
This hypothesis was included because it is part of the statement of \cite[Thm.~17.4]{Kato}. However, a 
closer reading of the proof of {\it loc.~cit.}~shows that all that is necessary is that (a) $\bar\rho_f$ be irreducible and (b) there exist an element $g\in \Gal(\bQ/\Q[\mu_{p^\infty}])$ such 
that $T_f/(\rho_f(g)-1)T_f$ is a free $\O$-module of rank one, as we explain in the following paragraph. All references to theorems or sections in the following paragraph are to \cite{Kato} unless 
otherwise indicated.

Hypothesis (*) intervenes in the proof of Theorem 17.4 through Theorem 15.5(4), which is proved
in \S 13.14. Hypothesis (a) together with Lemma \ref{lattice-lem} of this paper implies that, in the notation of \cite{Kato}, the conclusion
in \S 13.14 that $T_f = a\cdot V_{O_\lambda}(f)$ for some $a\in F_\lambda^\times$ holds; Lemma \ref{lattice-lem} of this paper can replace the reference 
to Lemma 14.7 in \S 13.14, which is the only explicit use of a basis with an image containing $\SL_2(\Zp)$ in the proof of Theorem 15.5(4).
Hypothesis (a) also, of course, ensures that the hypotheses of Theorem 12.4(3) hold, as needed in \S 13.14.
Hypothesis (b) ensures that the hypotheses of Theorem 13.4(3) hold. The proof of Theorem 15.5(4) in \S 13.14 then holds with (*) replaced by the hypotheses (a) and (b) above.

We now check that (a) and (b) hold under the hypotheses of Theorem \ref{MCthm1}.
Hypothesis (a) is just hypothesis (ii) of the theorem.
Hypothesis (b) is satisfied in light of hypothesis (iii) of the theorem: As $q\mid\mid N$, the action of
$I_q$ on $V_f$ is nontrivial and unipotent and in particular factors through the tame quotient (this is a consequence of the `local-global' compatibility of the Galois representation $\rho_f$ \cite[Thm.~A]{Carayol}). 
It follows that $\rho_f(\tau)$ is unipotent for any $\tau\in I_q$ projecting to a topological generator of the tame quotient and, since $\bar\rho_f$ is ramified at $q$, $\bar\rho_f(\tau)\neq 1$,
hence $T_f/(\rho_f(\tau)-1)T_f$ is a free $\O$-module of rank one. As $\tau\in \Gal(\bQ/\Q[\mu_{p^\infty}])$, condition (b) holds for $g=\tau$.

We also take this opportunity to note that the reference to \cite{Vatsal} in the proof of \cite[Prop.~12.3.6]{SU-MCGL} is not sufficient. 
It may be that the weight two specialization of the Hida family in {\it loc.~cit.}~that has trivial character also has multiplicative reduction
at $p$. This case is excluded in \cite{Vatsal}, though the ideas in that paper can be extended to this case, as is explained in 
\cite{Chida-Hsieh}. The reference to \cite[Thm.~1.1] {Vatsal} must be augmented by a reference to \cite[Thm.~C]{Chida-Hsieh}.

The purpose of this paper is, of course, to show that hypothesis (iv) of Theorem \ref{MCthm1} can be removed.

The main results of \cite{SU-MCGL} show that for a suitable imaginary quadratic field $K$ and a large enough set $\Sigma$,
the equality $Ch^\Sigma_{L}(f)Ch^\Sigma_{L}(f\otimes\chi_K) = (\CL_f^\Sigma\CL_{f\otimes\chi_K}^\Sigma)$ holds, where
$f\otimes\chi_K$ is the newform associated with the twist of $f$ by the primitive quadratic Dirichlet character corresponding to $K$.
When $p\nmid N$, this equality can be refined to an equality of the individual factors via the inclusions 
$\CL_f^\Sigma\in Ch_L^\Sigma(f)$ and $\CL_{f\otimes\chi_K}^\Sigma\in Ch_L^\Sigma(f\otimes\chi_K)$, which are proved
in \cite{Kato}. When $p\mid N$, these inclusions do not follow directly from \cite{Kato}; additional arguments are required.

\subsection{Hida families}\label{HidaThy}
Let $f\in S_k(\Gamma_0(N))$ be a newform that is ordinary with respect to an embedding
$\Q(f)\hookrightarrow \bQp$. Write $N=p^rM$ with $p\nmid M$ (so $r=0$ or $1$ 
by Lemma \ref{ordlemma}).  Let $L\subset \bQp$ be any finite extension of $\Qp$ 
containing the image of $\Q(f)$ and let $\O$ be the ring of integers of $L$. Let
$R_0 = \O[\![X]\!]$.  Hida proved that there is a finite, local $R_0$-domain $R$ and a formal 
$q$-expansion 
$$
\ff = \sum_{n=1}^\infty a_n q^n \in R[\![q]\!], \ \ \ a_1 = 1,
$$
satisfying:
\begin{itemize}
\item $R = R_0[\{a_\ell\ : \ \ell = \text{prime}\}]$;
\item if $\phi:R\rightarrow \bQp$ is a continuous $\O$-algebra homomorphism such 
that $\phi(1+X) = (1+p)^{k'}$ with $k'>2$ and $k'\equiv k \pmod{p-1}$, then 
$\sum_{n=1}^\infty \phi(a_n)q^n$ is the $q$-expansion of a $p$-stabilized newform,
in the sense that there is a newform $f_\phi\in S_{k'}(\Gamma_0(M))$ and an 
embedding $\Q(f_\phi)\hookrightarrow \bQp$ such that $\phi(a_\ell) = a_\ell(f_\phi)$
for all primes $\ell\neq p$ and $\phi(a_p)$ is the unit root of the polynomial
$x^2 - a_p(f_\phi)x + p^{k'-1}$;
\item there is a continuous $\O$-algebra homomorphism $\phi_0:R\rightarrow \O$ such that $\phi_0(1+X) = (1+p)^k$
and $\phi_0(a_\ell) = a_\ell(f)$, $\ell\neq p$, and $\phi_0(a_p)$ is the unit root of 
$x^2-a_p(f)x+p^{k-1}$ if $r=0$ and $\phi_0(a_p) = a_p(f)$ if $r=1$.
\end{itemize}
Furthermore, after possibly replacing $L$ with a finite extension, we may assume
\begin{itemize}
\item $\O$ is integrally closed in $R$.
\end{itemize}
Then, as explained by Greenberg and Stevens \cite{GrSt} (see also \cite[(1.4.7)]{Nek}), 
\begin{itemize}
\item there is an integer $c$ and an $\O$-algebra embedding 
$$
R \hookrightarrow R_c = \left\{ \sum_{i=0}^\infty u_i (x-k)^i \ : \ u_i\in L, \ 
\lim_{i\rightarrow \infty} \ord_p(u_i) + ci = +\infty \right\} \subset L[\![x]\!] 
$$
such that the induced embedding of $R_0$ sends $1+X$ to the power series
expansion of $(1+p)^x$ about $x=k$ and $\phi_0$ is the homomorphism induced
by evaluating at $x=k$.
\end{itemize}
Then evaluating at $x=k'$ for an integer $k'>2$ with $k'\equiv k \pmod{(p-1)p^c}$
defines a continuous $\O$-algebra homomorphism $\phi_{k'}:R\rightarrow L$
such that $\phi_k(1+X)=(1+p)^{k'}$ with corresponding newform 
$f_{\phi_{k'}}\in S_{k'}(\Gamma_0(M))$. Furthermore, it is clear that given any integer $m>0$,
there is an integer $r_m>0$ such that if $k' \equiv k \pmod{(p-1)p^{r_m}}$, then 
$\phi_{k'} \equiv \phi_0 \pmod{p^m\O}$; in particular, for all primes $\ell\neq p$
$$
a_\ell(f_{\phi_{k'}}) \equiv a_\ell(f) \pmod{p^m\O}.
$$
For each integer $m$ we choose such a $k'=k_m$ and write $f_m$ for the corresponding
$f_{\phi_{k_m}}$.  Note that we have chosen $k_m>2$ so that $f_m$ is a newform of level 
not divisible by $p$, though $p$ might divide the level of $f$.

Suppose that $\bar\rho_f$ is irreducible. Then there is a free rank two $R$-module $\T$ and a
continuous Galois representation 
$$
\rho_R:G_\Q\rightarrow \Aut_R(\T)
$$
that is unramified at each $\ell\nmid pN$ and such that for any such prime
$\trace\rho_R(\frob_\ell) = a_\ell \in R$.  In particular for $\phi:R\rightarrow \O$ being
$\phi_0$ or one of the homomorphisms $\phi_{k_m}$, $T_{f_\phi} =\T\otimes_{R,\phi}\O$ 
is a $G_\Q$-stable $\O$-lattice in $\T\otimes_{R,\phi}L \cong V_{f_\phi}$.
Let $T_f = T_{f_{\phi_0}}$ and $T_{f_m} = T_{f_{\phi_{k_m}}}$. 
Since $\phi_0$ and $\phi_m$ agree modulo $p^m$,
reduction modulo $p^m$ induces 
identifications
\begin{equation}\label{latticeequality}
T_f/p^m T_f  = \T\otimes_{R,\phi_0} \O/p^m\O = \T\otimes_{R,\phi_m}\O/p^m\O = T_{f_m}/p^m T_{f_m}
\end{equation}
as $\O[G_\Q]$-modules.

Suppose also that 
$$
\alpha_f^{-1}\eps^{k-1}\not\equiv \alpha_f \pmod{\grm}.
$$ 
Then there is a free rank-one $G_\Qp$-stable $R$-summand $\T^+\subset \T$ such that for
any of the $\phi$ as before, $\T^+\otimes_{R,\phi}\O  = T_{f_\phi}^+$. The identification
$T_f/p^m T_f = T_{f_m}/p^m T_{f_m}$ induces an identification 
\begin{equation}\label{latticeequality2}
T_f^+/p^m T_f^+  = T_{f_m}^+/p^m T_{f_m}^+.
\end{equation}

Greenberg and Stevens \cite{GrSt} and others have shown that the $p$-adic $L$-functions
$\CL_{f_\phi}$ for the forms $f_\phi$ arising from a Hida family fit into a `two-variable' $p$-adic $L$-function. In particular, following Emerton, Pollack, and Weston, we have the following.

\begin{prop}\label{2variableLprop}$($\cite[\S3 esp.~Prop.~3.4.3]{EPW}$)$ 
Let $\Sigma$ be a finite set of primes containing $p$.
If $\bar\rho_f$ is irreducible, then there exists
$\CL^\Sigma_\ff \in R[\![\Gamma]\!]$ such that for each continuous $\O$-algebra homomorphism
$\phi:R \rightarrow \bQp$ as above, the image of $\CL^\Sigma_\ff$ in 
$R[\![\Gamma]\!]\otimes_{R,\phi}{\phi(R)'}  = \Lambda_{{\phi(R)}'}$ is a
multiple of the $p$-adic $L$-function $\CL^\Sigma_{f_\phi}$ by a unit in $\phi(R)'$.
\end{prop}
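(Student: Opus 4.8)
\emph{Proof plan.} The statement is in essence the construction and interpolation property of the two-variable $p$-adic $L$-function attached to the Hida family $\ff$, due in the form needed here to Emerton--Pollack--Weston \cite[\S3]{EPW} (building on Greenberg--Stevens \cite{GrSt}). The plan is to recall that construction, extract the required interpolation at the arithmetic points $\phi_0$ and $\phi_{k_m}$, and then multiply in the away-from-$p$ Euler factors to pass from $\CL_\ff$ to $\CL^\Sigma_\ff$.

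First I would invoke the theory of $\Lambda$-adic modular symbols. Attached to $\ff$ there is a module of $R$-valued (distribution-valued) modular symbols of tame level $M$ carrying an action of the Hecke operators, and inside it the $\ff$-eigenspace for the system of eigenvalues $\{a_\ell\}$. The crucial input --- and the one place the hypothesis that $\bar\rho_f$ is irreducible is used --- is that this eigenspace is free of rank one over $R$; this is a multiplicity-one statement that rests on the Gorenstein property of Hida's ordinary Hecke algebra (the normalization that $\O$ be integrally closed in $R$ also being convenient here). Fix a generator $\Phi_\ff$ of this eigenspace.

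Next I would define $\CL_\ff \in R[\![\Gamma]\!]$ by the Mazur--Swinnerton-Dyer recipe in families: evaluate $\Phi_\ff$ on the modular-symbol cycle $\{\infty\}-\{0\}$ to obtain an $R$-valued distribution on $\Z_p$, restrict it to $\Z_p^\times$ to get an element of $R[\![\Z_p^\times]\!]$, and push it forward along $\Z_p^\times \twoheadrightarrow 1+p\Z_p \isoarrow \Gamma$ exactly as in the one-variable construction that yields $\CL_{f_\phi}$. Then set
$$
\CL^\Sigma_\ff \;:=\; \CL_\ff \cdot \prod_{\ell\in\Sigma,\ \ell\neq p}\CP_\ell \;\in\; R[\![\Gamma]\!],
$$
where $\CP_\ell$ is the evident lift to $R[\![\Gamma]\!]$ of the Euler factor $P_\ell(\Psi^{-1}\eps^{-1}(\frob_\ell))$, built from $a_\ell\in R$ and from the power-of-$\ell$ coefficient of $P_\ell$; the latter depends on the weight and is taken along the family via the universal weight character (for $\ell\neq p$ it is a $p$-adic unit, so it extends to a unit in $R$). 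By construction $\phi(\CP_\ell)$ is the Euler factor attached to $f_\phi$.

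Finally I would verify interpolation. For $\phi$ equal to $\phi_0$ or one of the $\phi_{k_m}$, the comparison of $\Lambda$-adic with classical modular symbols (Hida's control theorem) identifies the $\phi$-specialization of $\Phi_\ff$ with the classical modular symbol underlying $\CL_{f_\phi}$, up to a unit in $\phi(R)'$ --- the unit entering because $\Phi_\ff$ and the canonical periods $\Omega_{f_\phi}^\pm$ are each only well-defined up to units. Tracing the Mazur--Swinnerton-Dyer recipe applied to $\phi(\Phi_\ff)$, matching the resulting interpolation factor with $e(\phi)$ of \eqref{padicLeq1}, and using $\phi(\CP_\ell) = P_\ell(\Psi^{-1}\eps^{-1}(\frob_\ell))$ for $f_\phi$, one concludes that the image of $\CL^\Sigma_\ff$ in $\Lambda_{\phi(R)'}$ equals $\CL^\Sigma_{f_\phi}$ up to a unit in $\phi(R)'$. (When $p\mid N$ and $\phi=\phi_0$ one has $f_\phi=f$, which is the case used in the proof of Theorem \ref{thmA}.) The hard part is the rank-one freeness over $R$ in the second step, together with the careful bookkeeping of unit ambiguities in the last step; since this is precisely the content of \cite[Prop.~3.4.3]{EPW}, the most economical route is to quote that result and add only the routine multiplication by the Euler factors $\CP_\ell$.
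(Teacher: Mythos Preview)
Your proposal is correct and matches the paper's approach: the paper gives no proof at all, simply citing \cite[\S3, esp.~Prop.~3.4.3]{EPW} in the statement itself and then immediately using the result. Your sketch of the EPW construction (rank-one freeness of the $\Lambda$-adic modular symbol under the irreducibility hypothesis, the Mazur--Swinnerton-Dyer recipe, control at arithmetic points, and the routine insertion of Euler factors at $\ell\in\Sigma\setminus\{p\}$) is an accurate expansion of what lies behind that citation, and your concluding remark that ``the most economical route is to quote that result'' is exactly what the paper does.
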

\noindent Here ${\phi(R)}'$ is the integral closure of $\phi(R)$ in its field of fractions (which
is a finite extension of $L$). In particular, as $\phi_{k_m}(R) = \O$, the image of $\CL^\Sigma_\ff$ in 
$R[\![\Gamma]\!]\otimes_{R,\phi_{k_m}} \O = \Lambda_\O$ is just $u_m\CL^\Sigma_{f_m}$ for 
some $u_m\in \O^\times$. Assuming that $\bar\rho_f$ is irreducible, for each $m$
we then have an equality of $\Lambda_\O$-ideals
\begin{equation}\label{padicLequation}
(\CL^\Sigma_f,p^m) = (\CL^\Sigma_{f_m},p^m) \subseteq \Lambda_\O.
\end{equation}

\section{Assembling the pieces}\label{assemble}

We can now put together the various objects and results from Section \ref{gather} to prove 
Theorems \ref{thmA} and \ref{thmB} as indicated in the introduction.  We will freely use the notation introduced in Section \ref{gather}.

\subsection{Proof of Theorem A}
Let $f$, $L$, $\O$ be as in the statement of Theorem \ref{thmA}. In particular, 
$f\in S_k(N)$ is a newform of some weight $k\geq 2$ that is conguent to 2 modulo $p-1$
and some level $N$. Furthermore, if $f=\sum_{n=1}^\infty a_n(f)q^n$ is the $q$-expansion
of $f$, then $a_p(f)\in \O^\times$. If $p\nmid N$, then by Theorem \ref{MCthm1} the Iwasawa--Greenberg Main Conjecture is true: for any finite set of primes $\Sigma$ containing $p$, $Ch_L^\Sigma(f) = (\CL_f^\Sigma)$ in $\Lambda_\O$. So we assume that $p\mid N$. By Lemma \ref{ordlemma} we then have $N=pM$ with $p\nmid M$ and $k=2$.  Let $T_f\subset V_f$ be a $G_\Q$-stable $\O$-lattice.
By Lemma \ref{lattice-lem} this lattice is unique up to $L^\times$-multiple since $\bar\rho_f$ is assumed irreducible.

Let $\Sigma\supset \{\ell\mid N\}$ be a finite set of primes.
After possibly replacing $L$ with a finite extension, for each integer $m>0$ there exists
\begin{itemize}
\item[(a)] a newform $f_m\in S_{k_m}(\Gamma_0(M))$ with $\Q(f_m)\subset L$, $k_m>2$, and $k_m\equiv 2 \pmod{p-1}$ and such that $a_p(f_m) \in \O^\times$;
\item[(b)] a $G_\Q$-stable $\O$-lattice $T_{f_m}\subset V_{f_m}$ and 
an isomorphism $T_f/p^m T_f \cong T_{f_m}/p^m T_{f_m}$ as $\O[G_\Q]$-modules that identifies
$T^+_f/p^m T_f^+$ with $T_{f_m}^+/p^m T_{f_m}^+$ as $\O[G_{\Qp}]$-modules;
\item[(c)] an equality of ideals $(\CL_f^\Sigma,p^m) = (\CL_{f_m}^\Sigma,p^m)\subseteq \Lambda_\O$.
\end{itemize}
The forms $f_m$ in (a) are just those defined in the discussion of Hida families in Section \ref{HidaThy}. Then (b) is just \eqref{latticeequality} and \eqref{latticeequality2}, and (c) is
\eqref{padicLequation}.  Furthermore, we also have
\begin{itemize}
\item[(d)] $\bar\rho_{f_m} \cong \bar\rho_f$ is irreducible and ramified at some $q\neq p$ such 
that $q\mid\mid M$;
\item[(e)] $X^\Sigma_{\Q_\infty,L}(f_m)$ is a torsion $\Lambda_\O$-module 
and $Ch^\Sigma_L(f_m) = (\CL_{f_m}^\Sigma) \subseteq \Lambda_\O$.
\item[(f)] $X^\Sigma_{\Q_\infty,L}(f_m)$ has no nonzero finite-order
$\Lambda_\O$-submodules, so $F^\Sigma_L(f_m) = Ch^\Sigma(f_m)$.
\end{itemize}
Note that (d) follows from (b) and the hypotheses on $N$ and $\bar\rho_f$ in Theorem \ref{thmA},
while (e) and (f) follow from the Iwasawa--Greenberg Main Conjecture for $f_m$ (which holds by (a), (d), and Theorem \ref{MCthm1} since $f_m$ is of level $M$ and $p\nmid M$) together with Proposition \ref{Selmerstructureprop2} and Lemma \ref{Ch-Fitt-coro}.

From (b) together with Lemma \ref{Selmer-modpm} we conclude that there is a $\Lambda_\O$-isomorphism
$$
\Sel^\Sigma_{\Q_\infty,L}(f)[p^m] \cong \Sel^\Sigma_{\Q_\infty,L}(f_m)[p^m]
$$
of $\Lambda_\O$-modules, and hence, upon taking Pontryagin duals, also a
$\Lambda_\O$-isomorphism
$$
X^\Sigma_{\Q_\infty,L}(f)/ p^m X^\Sigma_{\Q_\infty,L}(f) 
\cong X^\Sigma_{\Q_\infty,L}(f_m)/ p^m X^\Sigma_{\Q_\infty,L}(f_m).
$$
From basic properties of Fitting ideals we then conclude that there as an equality
of $\Lambda_\O$-ideals
$$
(F^\Sigma_L(f),p^m) = (F^\Sigma_L(f_m),p^m).
$$
Together with (c), (e), and (f) we then have
\begin{equation}\label{Fitt-L-eq}
(F^\Sigma_L(f),p^m) = (\CL_f^\Sigma, p^m) \subseteq \Lambda_\O.
\end{equation}

As $\CL_f$, and hence $\CL_f^\Sigma$, is non-zero by a well-known theorem
of Rohrlich \cite[Thm.~1]{Rohr}, if $m$ is large enough then $(\CL_f^\Sigma,p^m) \neq p^m\Lambda_\O$.
From this and \eqref{Fitt-L-eq} it then follows that if $m$ is large enough, then
$(F_L^\Sigma(f),p^m) \neq p^m\Lambda_\O$ and hence $F_L^\Sigma(f)\neq 0$.
As $F_L^\Sigma(f)\neq 0$, $X_{\Q_\infty,L}^\Sigma(f)$ must be a torsion $\Lambda_\O$-module.
It then follows from Proposition \ref{Selmerstructureprop2}(ii) and Lemma \ref{Ch-Fitt-coro}(ii)
that $Ch_L^\Sigma(f) = F_L^\Sigma(f)$. Combining this with \eqref{Fitt-L-eq} we then 
conclude that for all integers $m$
\begin{equation}\label{Ch-L-eq1}
(Ch_L^\Sigma(f), p^m) = (\CL_f^\Sigma, p^m) \subseteq \Lambda_\O.
\end{equation}

The characteristic ideal $Ch_L^\Sigma(f)$ is a principal ideal. Let $\CC_f^\Sigma$ be
a generator. 
From \eqref{Ch-L-eq1} it follows that for each integer $m$ there is an $u_m\in \Lambda_\O$ such that
\begin{equation}\label{modpmeq1}
\CC_f^\Sigma - u_m\CL_f^\Sigma \in p^m\Lambda_\O.
\end{equation}
Let $\varpi$ be a uniformizer of $\O$ and let $e$ be such that $(p) = (\varpi^e)$. As $\CL_f^\Sigma\neq 0$, there exists an integer $m_0\geq 0$ such that $\CL_f^\Sigma(f)\in \varpi^{m_0}\Lambda_\O$ but $\CL_f^\Sigma(f)\not\in \varpi^{m_0+1}\Lambda_\O$. It then follows from \eqref{modpmeq1}
that 
\begin{equation*}\label{modpmeq2}
u_{m'} - u_m \in \varpi^{me-m_0}\Lambda_\O, \ \ m'\geq m.
\end{equation*}
Therefore the sequence $\{u_m\}$ converges in $\Lambda_\O$ to an element $u\in \Lambda_\O$
such that for all $m$
$u-u_m \in \varpi^{me-m_0}\Lambda_\O.$
From this and \eqref{modpmeq1} it follows that 
$$
\CC^\Sigma_f -u \CL_f^\Sigma \in \varpi^{me-m_0} \ \ \forall m\geq 0,
$$
whence $\CC^\Sigma_f = u \CL^\Sigma_f$. That is $\CC^\Sigma_f \in (\CL_f^\Sigma)$. 

Since $X^\Sigma_{\Q_\infty,L}(f)$ is a torsion $\Lambda_\O$-module, $Ch_L^\Sigma(f)$ is non-zero, and so $\CC_f^\Sigma\neq 0$. We may then reverse the roles of $\CC^\Sigma_f$ and $\CL_f^\Sigma$ in the above argument to show that $\CL^\Sigma_f\in (\CC^\Sigma_f)$. From the two inclusions we then conclude 
$$
(\CL^\Sigma_f) = (\CC^\Sigma_f) = Ch_L^\Sigma \subseteq \Lambda_\O.
$$
This proves the desired equality, at least for the chosen $L$ and for $\Sigma$ containing all primes $\ell\mid N$. But, as observed in Section \ref{MCsection}, this implies the desired equality for all sets 
$\Sigma$ and all possible $L$. That is, the Iwasawa--Greenberg Main Conjecture holds for $f$: 
Theorem \ref{MCthm1} holds without hypothesis (iv).

\subsection{Proof of Theorem B} 
Let $f$, $L$, $\O$ be as in the statement of Theorem \ref{thmB}. 
As these also satisfy the hypotheses of Theorem \ref{thmA}, $X_{\Q_\infty,L}(f)$ is a torsion $\Lambda_\O$-module and 
its $\Lambda_\O$-characteristic ideal $Ch_L(f)$ is generated by the $p$-adic $L$-function
$\CL_f$. Furthermore, by Proposition \ref{Selmerstructureprop2}, neither $X_{\Q_\infty,L}(f)$ nor $\CX$ have
a nonzero finite-order $\Lambda_\O$-submodule.  To deduce the conclusions of Theorem \ref{thmB} from 
this, we make a close study of $\Sel_{\Q^\infty}(f)[\gamma-1]$
and $\CS[\gamma-1]$, following the methods of Greenberg \cite{Gr-CIME}.  

By Proposition \ref{Selmerstructureprop1} there is an exact sequence
$$
0 \rightarrow \CS \rightarrow \Sel_{\Q_\infty,L}(f) \rightarrow H^1(\F_p,(\CM^-)^{I_p}) \rightarrow 0.
$$
As $G_\Qp$ acts on $\CM^- \cong \Lambda^*$ through the character $\alpha_f \Psi^{-1}$,
$(\CM^-)^{I_p} \cong \Lambda^*[\gamma-1]=\Hom_\Zp(\O,\Qp/\Zp) \cong L/\O$ with $G_\Qp$ acting through the unramified character $\alpha_f$. 
Let
$$
\alpha_p = \alpha_f(\frob_p). 
$$
Then 
$H^1(\F_p,(\CM^-)^{I_p} )=0$ unless $\alpha_p=1$ (i.e., unless $f$ has split multiplicative reduction at $p$), in which case
it is isomorphic to $L/\O$. Letting $Ch_L(f)'$ be the $\Lambda_\O$-characteristic ideal of $\CX$, it follows that
$$
Ch_L(f) = Ch_L(f)' \cdot \begin{cases} (\gamma-1) & \text{$f$ has split multiplicative reduction at $p$}  \\
1 & \text{otherwise.}
\end{cases}
$$
This reflects the `extra zero' phenomenon in the split multiplicative case observed at the end of Section \ref{padicLs}.
In fact, we then have 
\begin{equation*}\label{charSeq1}
Ch_L(f)' = \begin{cases} 
(\CL_f') & \text{$f$ has split multiplicative reduction at $p$} \\
(\CL_f) & \text{otherwise.}
\end{cases}
\end{equation*}
As $\CX$ has no non-zero finite-order $\Lambda_\O$-submodules, a standard result\footnote{See footnote 5.} in Iwasawa theory  
gives $\#\CX/(\gamma-1)\CX = \#\Lambda_\O/(\gamma-1,Ch_L(f)')$.  As $\#\CS[\gamma-1] = \#\CX/(\gamma-1)\CX$, we then find
\begin{equation}\label{CSordereq1}
\#\CS[\gamma-1] = \begin{cases} \#\O/(\CL_f'(\phi_0)) & \text{$f$ has split multiplicative reduction at $p$} \\
\#\O/(\CL_f(\phi_0)) & \text{otherwise,}
\end{cases}
\end{equation}
where $\phi_0:\Lambda_\O\rightarrow\O$ is the continuous $\O$-algebra homomorphism sending $\gamma$ to $1$.

Let $\Sigma = \{\ell\mid Np\}$. Let
$$
W = \CM[\gamma-1] \cong T_f\otimes_\Zp\Qp/\Zp \ \ \text{and} \ \
W^\pm = \CM[\gamma-1]^\pm \cong T_f^\pm\otimes_\Zp\Qp/\Zp.
$$
Let
$$
\CP_\Sigma = H^1(\Q_p,\CM^-) \times \prod_{\ell\in\Sigma, \ell\neq p}H^1(\Ql,\CM)
$$
and
$$
P_{\Sigma} = H^1(\Qp,W)/L_p(W) \times \prod_{\ell\in\Sigma,\ell\neq p}H^1(\Ql,W),
$$
where $L_p(W) = \im\{H^1(\Qp,W^+)\rightarrow H^1(\Qp,W)\}$. Let $P_{\Sigma}^\div$ be defined
just as $P_{\Sigma}$ but with $L_p(W)$ 
replaced by its maximal divisible subgroup $L_p(W)^{\div}$. The usual (torsion) Bloch-Kato Selmer group for $T_f$ is just
\begin{equation*}\label{Sel-f-eq}
\Sel_L(f) = \ker\{H^1(G_\Sigma, W) \stackrel{res}{\rightarrow} P_{\Sigma}^\div\}.
\end{equation*}
As the the restriction map $H^1(G_\Sigma,\CM) \rightarrow \CP_\Sigma$
is surjective by Proposition \ref{Selmerstructureprop1}, we conclude that there is a short exact sequence
$$
0\rightarrow \Sel_L(f) \rightarrow \CS[\gamma-1] \rightarrow \im\{H^1(G_\Sigma,W)\stackrel{res}{\rightarrow} P_{\Sigma}^\div\}\cap
\ker\{P_{\Sigma}^\div\rightarrow \CP_\Sigma[\gamma-1]\} \rightarrow 0.
$$
Let $K= \ker\{P_\Sigma^\div\rightarrow \CP_\Sigma[\gamma-1]\}$. We claim that
\begin{equation}\label{CSordereq2} 
\#\CS[\gamma-1] = \#\Sel_L(f)\cdot \#K. 
\end{equation}
If $\Sel_L(f)$ is infinite, then there is nothing to prove since $\Sel_L(f)\subset \CS[\gamma-1]$. 
Suppose then that $\Sel_L(f)$ is finite. We will show that  the restriction map
$H^1(G_\Sigma,W)\stackrel{res}{\rightarrow} P_{\Sigma}^\div$ is surjective, from which the claim follows.

By global duality, the cokernel
of the restriction map $H^1(G_\Sigma,W){\rightarrow} P_{\Sigma}^\div$ is dual to a subquotient of 
$$
\Sel^\Sigma(T_f)^\sat = \ker\{H^1(G_\Sigma,T_f)\rightarrow H^1(\Qp,T_f)/L_p(T_f)^\sat\},
$$ 
where $L_p(T_f) = \im\{H^1(\Qp,T_f^+)\rightarrow H^1(\Qp,T_f)\}$ and 
$$
L_p(T_f)^\sat = \{x \in H^1(\Qp,T_f) \ : \ p^n x\in L_p(T_f) \ \text{some $n\geq 0$}\}.
$$
Here we have used that $T_f\cong \Hom_\Zp(W,\Qp/\Zp(1))$ as an $\O[G_\Q]$-module and that such an isomorphism 
identifies $L_p(T_f)^\sat$ and $L_p(W)^\div$
as mutual annihilators under local Tate duality. Then
$\Sel^\Sigma(T_f)^\sat$ is a torsion-free $\O$-module (as $\bar\rho_f$ is irreducible) 
and its $\O$-rank equals the $\O$-corank of $\Sel_L(f)$.
In fact, $\Sel^\Sigma(T_f)^\sat = H^1(G_\Sigma, T_f) \cap H^1_f(\Q,V_f)$, where 
$$
H^1_f(\Q,V_f) = \ker\{H^1(G_\Sigma, V_f)\stackrel{res}{\rightarrow} H^1(\Qp,V_f)/L_p(V_f) \times \prod_{\ell\in\Sigma,\ell\neq p}
H^1(\Ql,V_f)\}
$$
and $L_p(V_f) = \im\{H^1(\Qp,V_f^+)\rightarrow H^1(\Qp,V_f)\}$. (So $H_f^1(\Q,V_f)$ is just the usual characteristic zero 
Bloch-Kato Selmer group of $V_f$.)  In particular, the $\O$-rank of $\Sel^\Sigma(T_f)^\sat$ is the $L$-dimension 
of $H^1_f(\Q,V_f)$. The image of $H^1_f(\Q,V_f)$ in $H^1(G_\Sigma,T_f\otimes_\Zp\Qp/\Zp)\cong H^1(G_\Sigma,W)$
is the maximal divisible submodule of $\Sel_L(f)$. However, the latter is assumed to be of finite-order, so its maximal divisible 
subgroup is trivial. This proves that $\Sel^\Sigma(T_f)^\sat = 0$ and hence that the restriction map
$H^1(G_\Sigma,W)\stackrel{res}{\rightarrow} P_{\Sigma,x}^\div$ is a surjection. The equality \eqref{CSordereq2} follows.

Put
$$
L^\alg(f,1) = \frac{L(f,1)}{-2\pi i \Omega_f^+}.
$$
Combining \eqref{CSordereq1} with \eqref{CSordereq2}, the specialization formula for $\CL_f$, and the Greenberg-Stevens
formula \eqref{GrSteq} yields
\begin{equation}\label{Selmerordereq1}
\#\Sel_L(f)\cdot \#K = \begin{cases}
\#\O/(\frac{1}{\log_pu}\cdot\grL(V_f)\cdot L^\alg(f,1)) & \alpha_p=1 \\
\#\O((1-\alpha_p)^2\cdot L^\alg(f,1)) & \text{otherwise}.
\end{cases}
\end{equation}
Therefore,  to complete the proof Theorem \ref{thmB} it remains to express $\#K$ in terms of Tamagawa factors and the $L$-invariant $\grL(V_f)$.

From the definition of $K$, 
\begin{equation}\label{Kprodeq}
K= \prod_{\ell\in\Sigma} K_\ell
\end{equation}
with 
$$
K_\ell = \begin{cases}
\ker\{H^1(\Ql,W){\rightarrow} H^1(\Ql,\CM)\} & \ell\neq p \\
\ker\{H^1(\Qp,W)/L_p(W)^\div\rightarrow H^1(\Qp,\CM^-)\} & \ell=p.
\end{cases}
$$
If $\ell\neq p$,  then $\CM^{I_\ell}$ is $\gamma-1$-divisible and so $H^1(I_\ell,W)\hookrightarrow H^1(I_\ell,\CM)$ and 
$$
K_\ell = \ker\{H^1(\F_\ell,W^{I_\ell})\rightarrow H^1(\F_\ell,\CM^{I_\ell})=0\} = H^1(\F_\ell,W^{I_\ell}).
$$
Therefore
\begin{equation}\label{Kleq}
\#K_\ell = \#H^1(\F_\ell,W^{I_\ell})= c_\ell(T_f),
\end{equation}
where $c_\ell(T_f)= \#H^1(\F_\ell,W^{I_\ell})$ is just the Tamagawa number at $\ell\neq p$ defined by Bloch and Kato for the 
$p$-adic representation 
$T_f$. Note that $c_\ell(T_f)=1$ if $\ell\nmid N$ (i.e., if $T_f$ is unramified at $\ell$).
Hence to complete the proof of Theorem \ref{thmB} it remains to express $\#K_p$ in terms
of $\alpha_p$ if $f$ does not have split multiplicative reduction at $p$ (equivalently $\alpha_p\neq 1$) and in terms of 
$\grL(V_f)$ and the Tamagawa number at $p$ of $T_f$ otherwise.

Let 
$$
c_p' = \#\ker\{H^1(\Qp,W)/L_p(W)^\div\twoheadrightarrow H^1(\Qp,W)/ L_p(W)\}
$$
and
$$
c_p'' = \#\ker\{H^1(\Qp,W)/L_p(W)\rightarrow H^1(\Qp,\CM)/ L_p(\CM)\}.
$$
Then 
$$
\# K_p = c_p'c_p''.
$$

By Tate local duality, 
$L_p(W)$ is dual to $H^1(\Qp,T_f)/L_p(T_f)$ and $L_p(W)^\div$ is dual to $H^1(\Qp,T_f)/L_p(T_f)^\sat$.
Therefore
\begin{equation*}
c_p'  = \#(L_p(W)/L_p(W)^\div)  = \#(L_p(T_f)^\sat/L_p(T_f)).
\end{equation*}
Since $H^1(\Qp,T_f)/L_p(T_f)\hookrightarrow H^1(\Qp,T_f^-)$ and 
$H^1(\Qp,T_f)/L_p(T_f)^\sat\hookrightarrow H^1(\Qp, V_f^-)$, we find that the (injective) image of 
$L_p(T_f)^\sat/L_p(T_f)$ in $H^1(\Qp,T_f^-)$ is just
$$
\im\{H^1(\Qp,T_f)/L_p(T_f)\hookrightarrow H^1(\Qp,T_f^-)\}\cap \ker\{H^1(\Qp,T_f^-)\rightarrow H^1(\Qp,V_f^-)\}.
$$
But $H^1(I_p,T_f^-) \hookrightarrow H^1(I_p,V_f^-)$, so 
$\ker\{H^1(\Qp,T_f^-)\rightarrow H^1(\Qp,V_f^-)\} = H^1(\F_p, T_f^-)$. On the other hand, the boundary map injects the cokernel of 
$H^1(\Qp,T_f)/L_p(T_f)\hookrightarrow H^1(\Qp,T_f^-)$ into $H^2(\Qp, T_f^+)$ but
sends the the subgroup $H^1(\F_p,T_f^-)$  to zero (since $\Gal(\bar\F_p/\F_p)$ has cohomological dimension one). 
Hence the image of $H^1(\Qp,T_f)/L_p(T_f)\hookrightarrow H^1(\Qp,T_f^-)$ contains $H^1(\F_p,T_f^-)$.
It then follows that
$$
L_p(T_f)^\sat/L_p(T_f) \isoarrow H^1(\F_p,T_f^-) \cong \begin{cases} 0 & \alpha_p =1 \\ 
\O/(\alpha_p-1) & \text{otherwise}.
\end{cases}
$$
In particular,
\begin{equation*}\label{cp1}
c_p' = \begin{cases} 1 & \alpha_p=1 \\  \#(\O/(\alpha_p-1)) & \text{otherwise}.
\end{cases}
\end{equation*}

It remains to deduce the desired expression for $c_p''$.  By definition $c_p''$ equals
$$
\#\left(\im\{H^1(\Qp,W)/L_p(W)\hookrightarrow H^1(\Qp,W^-)\}\cap \ker\{H^1(\Qp,W^-)\rightarrow H^1(\Qp,\CM^-)\}\right).
$$
Since $H^2(\Qp,W^+)$ is dual to $H^0(\Qp,T_f^-)$ and the latter is $0$ 
if $\alpha_p\neq 1$, we have $H^1(\Qp,W)/L_p(W)\isoarrow H^1(\Qp,W^-)$ if $\alpha_p\neq 1$. 
It follows that in this case
$$
c_p'' = \#\ker\{H^1(\Qp,W^-)\rightarrow H^1(\Qp,\CM^-)\}= \#(\CM^-)^{G_\Qp}/(\gamma-1)\cdot(\CM^-)^{G_\Qp}.
$$
As $I_p$ acts on $\CM^-$ through the character $\Psi^{-1}$ and $\frob_p$ acts on $(\CM^-)^{I_p} = \CM^-[\gamma-1] \cong L/\O$ as 
multiplication by $\alpha_p$, we find
$$
 c_p'' = \#L/\O[\alpha_p-1] = \# \O/(\alpha_p-1), \ \ \alpha_p\neq 1.
$$

Suppose then that $\alpha_p=1$.  It follows from local duality that $c_p$ equals
the index of the $\O$-submodule of $H^1(\Qp,T_f^+)$ generated by $\ker\{H^1(\Qp,T_f^+) \hookrightarrow H^1(\Qp,T_f)\}$
and the annihilator of $\ker\{H^1(\Qp,W^-)\rightarrow H^1(\Qp,\CM^-)\}$. The first is just the image
of $\O\cong H^0(\Qp,T_f^-) \rightarrow H^1(\Qp,T_f^+)$ determined by the $G_\Qp$-extension $T_f$. Let $c_{V_f}$ be an 
$\O$-generator; this is a non-zero element in $\ell_{V_f}$ in the notation of Section \ref{Linvariant}. 
On the other hand, as $H^1(\Qp,W^-) \cong \Hom_{cts}(G^{ab,p}_\Qp,L/\O)$, the kernel
$\ker\{H^1(\Qp,W^-)\rightarrow H^1(\Qp,\CM^-)\}$ is readily seen to be $\Hom_{cts}(\Gamma,L/\O)$ - those homomorphisms that factor through $\Gamma$. Then, under the identification 
$$
H^1(\Qp,T_f^+) = H^1(\Qp,\O(1)) = (\varprojlim_n \Qp^\times/(\Qp^\times)^{p^n})\otimes_\Zp\O,
$$
the annihilator of $\Hom_{cts}(\Gamma,L/\O)$ is identified with the $\O$-module $p\otimes\O$ generated by the image of $p^\Z$. 
The index of $\O\cdot c_{V_f} + p\otimes\O$ is just the index of the projection of $c_{V_f}$ to $(\varprojlim_n \Z_p^\times/(\Z_p^\times)^{p^n})\otimes_\Zp \O$. From the definition of $\psi_\cyc$ in Section \ref{Linvariant}, this
index is just $\#\O/(\frac{1}{\log_p u}\cdot\psi_\cyc(c_{V_f}))$. So by the definition of $\psi_\ur$ (which is non-zero on $c_{V_f}$ as
$0\neq c_{V_f}\in \ell_{V_f}$) and the definition of $\grL(V_f)$,
$$
c_p'' = \#\O/(\frac{1}{\log_p u}\cdot\psi_\cyc(c_{V_f})) = \#\O/(\frac{1}{\log_p u}\cdot \grL(V_f)\cdot\psi_\ur(c_{V_f})), \ \ \alpha_p=1.
$$

Combining the formulas for $c_p''$ in the two cases with those for $c_p'$ we find
\begin{equation}\label{Kpeq}
\#K_p = \begin{cases} 
\#\O/(\frac{1}{\log_pu}\cdot \grL(V_f) \cdot \psi_\ur(c_{V_f})) & \alpha_p=1 \\
\#\O/(\alpha_p-1)^2 & \alpha_p\neq 1.
\end{cases}
\end{equation}

Suppose $\grL(V_f)\neq 0$ if $\alpha_p=1$. Then combining \eqref{Selmerordereq1} with \eqref{Kprodeq}, \eqref{Kleq},
and \eqref{Kpeq} yields
$$
\#\O/(L^\alg(f,1)) = \#\Sel_L(f)\cdot\prod_{\ell\neq p} c_\ell(T_f) \cdot\begin{cases}
\#\O/(\psi_\ur(c_{V_f})) & \alpha_p = 1 \\ 1 & \alpha_p\neq 1.
\end{cases}
$$
That the final term is just the Bloch-Kato Tamagawa number at $p$ of the representation $T_f$, which we denote $c_p(T_f)$,
can be shown as in \cite{DumTam}; in {\it loc.~cit.}~$c_p(T_f)$
is denoted $\mathrm{Tam}^0_M(T_f)$. The only significant change is the need to include the $\O$-action, but this is a straightforward modification. 
In the $p\nmid N$ case - that is, the case where $V_f$ is a crystalline representation of $G_\Qp$ - that $c_p(T_f)=1$ follows
by the arguments used to prove \cite[Thm.~5.1]{DumTam}. The $p\mid\mid N$ case - in which case $V_f$ is a semistable representation 
of $G_\Qp$ - follows as in \cite[\S7]{DumTam} from the arguments used to prove \cite[Thm.~6.1]{DumTam}.
We therefore have the formula asserted in Theorem \ref{thmB}:
\begin{equation}\label{Selmerordereq2}
\#\O/(L^\alg(f,1)) = \#\Sel_L(f)\cdot\prod_{\ell} c_\ell(T_f).
\end{equation}
This completes the proof of Theorem \ref{thmB}.

\subsection{Proof of Theorem \ref{thmC}}
Theorem \ref{thmC} is just a special case of Theorem \ref{thmB}. To see this, let $E$ be as in Theorem \ref{thmC} and let $f\in S_2(\Gamma_0(N))$ be the newform associated with $E$, so $N$ is the conductor of $E$ and $L(E,s) = L(f,s)$. For Theorem \ref{thmC} to follow from Theorem \ref{thmB}, if suffices to have that under the hypotheses of Theorem \ref{thmC}, hypotheses (i), (ii), and (iii) of Theorem \ref{thmB} hold for $f$ and 
$\Omega_E$ is a $\Z_{(p)}^\times$-multiple of $-2\pi i \Omega_f^+$. 

That hypotheses (i) and (ii) of Theorem \ref{thmC} imply hypotheses (i) and (ii) of Theorem \ref{thmB} is immediate.  Furthermore, as noted in the example at the end of 
\ref{Linvariant}, if $E$ has split multiplicative reduction at $p$ then the $\grL$-invariant $\grL(V_f)$ of $f$ is non-zero, hence hypothesis (iii)
of Theorem \ref{thmC} also holds. 

To compare periods, we first recall that if $\omega_E$ is a N\'eron differential of $E$ then 
$$
\Omega_E = \int_{c^+} \omega_E \in \C^\times,
$$
where $c^+$ is a generator of the submodule $H_1(E(\C),\Z)^+\subset H_1(E(\C),\Z)$
that is fixed by the action of $\Gal(\C/\R)$; this is well-defined up to multiplication by $\pm 1$. Now let
$$
\phi: X_1(N)\rightarrow E^\opt
$$
be an optimal parameterization for the $\Q$-isogeny class of $E$ as in \cite[Prop.~(1.4)]{Stevens-opt}. Then, as demonstrated in 
the proof of
\cite[Prop.~(3.1)]{Vat-Gr}, $\Omega_{E^\opt}$ equals $-2\pi i \Omega_f^+$ up to a $\Z_{(p)}^\times$-multiple\footnote{The key points
are \cite[Prop.~(3.3)]{Vat-Gr}, which shows that if $\ord_p(N)\leq 1$ then $\phi^*\omega_{E^\opt} = c\cdot 2\pi i f(z) dz$ for some integer $c\in \Z$ 
such that $p\nmid c$, and the fact - by the definition of an optimal parameterization - that $\phi$ induces a surjection $H_1(X_1(N)(\C),\Z)\twoheadrightarrow H_1(E^\opt(\C),\Z)$.}. 
Let 
$$
\beta:E^\opt\rightarrow E
$$
be a $\Q$-isogeny. Since $E[p]$ is an irreducible $G_\Q$-representation, $\beta$ can be chosen so that its degree is
prime to $p$. Then $\beta^*\omega_E$ is a $\Z_{(p)}^\times$-multiple of $\omega_{E^\opt}$, and so
$\Omega_E$ is a $\Z_{(p)}^\times$-multiple of $\Omega_{E^\opt}$ and hence also of $-2\pi i \Omega_f^+$.

\begin{bibdiv}
\begin{biblist}

\bib{AV}{article}{
   author={Amice, Yvette},
   author={V{\'e}lu, Jacques},
   title={Distributions $p$-adiques associ\'ees aux s\'eries de Hecke},
   conference={
      title={Journ\'ees Arithm\'etiques de Bordeaux (Conf., Univ. Bordeaux,
      Bordeaux, 1974)},
   },
   book={
      publisher={Soc. Math. France},
      place={Paris},
   },
   date={1975},
   pages={119--131. Ast\'erisque, Nos. 24-25}
   }

\bib{BDGP}{article}{
   author={Barr{\'e}-Sirieix, Katia},
   author={Diaz, Guy},
   author={Gramain, Fran{\c{c}}ois},
   author={Philibert, Georges},
   title={Une preuve de la conjecture de Mahler-Manin},
   journal={Invent. Math.},
   volume={124},
   date={1996},
   number={1-3},
   pages={1--9}
   }

\bib{Carayol}{article}{
   author={Carayol, Henri},
   title={Sur les repr\'esentations $l$-adiques associ\'ees aux formes
   modulaires de Hilbert},
   journal={Ann. Sci. \'Ecole Norm. Sup. (4)},
   volume={19},
   date={1986},
   number={3},
   pages={409--468}
   }
   
   \bib{Chida-Hsieh}{article}{
   author={Chida, Mastaka},
   author={Hsieh, Ming-Lun},
   title={Special values of anticyclotomic $L$-functions for modular forms},
   note={preprint}
   }
   
   \bib{DumTam}{article}{
   author={Dummigan, Neil},
   title={Tamagawa factors for certain semi-stable representations},
   journal={Bull. London Math. Soc.},
   volume={37},
   date={2005},
   number={6},
   pages={835--845}
   }
   
\bib{EPW}{article}{
   author={Emerton, Matthew},
   author={Pollack, Robert},
   author={Weston, Tom},
   title={Variation of Iwasawa invariants in Hida families},
   journal={Invent. Math.},
   volume={163},
   date={2006},
   number={3},
   pages={523--580}
   }
   
\bib{OF-Tam}{article}{
author={Fouquet, Olivier},
title= {The Equivariant Tamagawa Number Conjecture for modular motives with coefficients in the Hecke algebra},
note={preprint}
}
   
   \bib{Gr-CIME}{article}{
   author={Greenberg, Ralph},
   title={Iwasawa theory for elliptic curves},
   conference={
      title={Arithmetic theory of elliptic curves},
      address={Cetraro},
      date={1997},
   },
   book={
      series={Lecture Notes in Math.},
      volume={1716},
      publisher={Springer},
      place={Berlin},
   },
   date={1999},
   pages={51--144}
   }

\bib{Gr-sur}{article}{
author={Greenberg, Ralph},
title={Surjectivity of the local-to-global map defining a Selmer group},
note={preprint}
}

\bib{Gr-nopseudonull}{article}{
   author={Greenberg, Ralph},
   title={Iwasawa theory for elliptic curves},
   conference={
      title={Arithmetic theory of elliptic curves},
      address={Cetraro},
      date={1997},
   },
   book={
      series={Lecture Notes in Math.},
      volume={1716},
      publisher={Springer},
      place={Berlin},
   },
   date={1999},
   pages={51--144}
   }
   
\bib{GrSt}{article}{
   author={Greenberg, Ralph},
   author={Stevens, Glenn},
   title={$p$-adic $L$-functions and $p$-adic periods of modular forms},
   journal={Invent. Math.},
   volume={111},
   date={1993},
   number={2},
   pages={407--447}
   }
   
   \bib{Vat-Gr}{article}{
   author={Greenberg, Ralph},
   author={Vatsal, Vinayak},
   title={On the Iwasawa invariants of elliptic curves},
   journal={Invent. Math.},
   volume={142},
   date={2000},
   number={1},
   pages={17--63}
}

\bib{Kato}{article}{
   author={Kato, Kazuya},
   title={$p$-adic Hodge theory and values of zeta functions of modular
   forms},
   note={Cohomologies $p$-adiques et applications arithm\'etiques. III},
   journal={Ast\'erisque},
   number={295},
   date={2004},
   pages={ix, 117--290}
   }
   
   \bib{MTT}{article}{
   author={Mazur, B.},
   author={Tate, J.},
   author={Teitelbaum, J.},
   title={On $p$-adic analogues of the conjectures of Birch and
   Swinnerton-Dyer},
   journal={Invent. Math.},
   volume={84},
   date={1986},
   number={1},
   pages={1--48}
   }

\bib{Nek}{article}{
   author={Nekov{\'a}{\v{r}}, Jan},
   author={Plater, Andrew},
   title={On the parity of ranks of Selmer groups},
   journal={Asian J. Math.},
   volume={4},
   date={2000},
   number={2},
   pages={437--497}
   }

\bib{Rohr}{article}{
   author={Rohrlich, David E.},
   title={$L$-functions and division towers},
   journal={Math. Ann.},
   volume={281},
   date={1988},
   number={4},
   pages={611--632},
}

\bib{Saito-compat}{article}{
   author={Saito, Takeshi},
   title={Modular forms and $p$-adic Hodge theory},
   journal={Invent. Math.},
   volume={129},
   date={1997},
   number={3},
   pages={607--620},
   }

\bib{SU-MCGL}{article}{
   author={Skinner, Christopher},
   author={Urban, Eric},
   title={The Iwasawa Main Conjectures for $GL_2$},
   journal={Invent. Math.},
   volume={195},
   date={2014},
   number={1},
   pages={1--277}
   }

\bib{Stevens-opt}{article}{
   author={Stevens, Glenn},
   title={Stickelberger elements and modular parametrizations of elliptic
   curves},
   journal={Invent. Math.},
   volume={98},
   date={1989},
   number={1},
   pages={75--106}
}

\bib{Vatsal}{article}{
   author={Vatsal, V.},
   title={Special values of anticyclotomic $L$-functions},
   journal={Duke Math. J.},
   volume={116},
   date={2003},
   number={2},
   pages={219--261}
   }

\bib{Vishik}{article}{
   author={Vi{\v{s}}ik, M. M.},
   title={Nonarchimedean measures associated with Dirichlet series},
   journal={Mat. Sb. (N.S.)},
   volume={99(141)},
   date={1976},
   number={2},
   pages={248--260, 296}
   }

\bib{WZ-GZform}{article}{
author={Zhang, Wei}
title={Selmer groups and divisibility of Heegner points}
note={preprint (2013)}
}

\end{biblist}
\end{bibdiv}

\end{document}